\newtheorem{theorem}{Theorem}[section]
\newtheorem{lemma}[theorem]{Lemma}
\newtheorem{proposition}[theorem]{Proposition}
\theoremstyle{definition}
\newtheorem{remark}[theorem]{Remark}
\renewcommand{\proofname}{Proof}
\numberwithin{equation}{section}
\begin{document}

\title{Sharp inequalities over the unit polydisc}

\author{Marijan Markovi\'{c}}

\address{Faculty of Natural Sciences and Mathematics\endgraf
University of Montenegro\endgraf
Cetinjski put b.b.\endgraf
81000 Podgorica\endgraf
Montenegro}

\email{marijanmmarkovic@gmail.com}

\subjclass[2010]{Primary 32A35; Secondary 32A36}

\keywords{Hardy spaces in the unit polydisc,  logarithmically subharmonic functions, the
isoperimetric inequality}

\dedicatory{Dedicated to Professor Miodrag Mateljevi\'{c}\\  on the occasion of his 65th
birthday}

\begin{abstract}
Motivated by some                 results due to Burbea we prove that if a certain sharp
integral inequality  holds for  functions in the unit polydisc which  belong to concrete
Hardy  spaces, then it also holds, in an appropriate form, in the case of functions from
arbitrary Hardy spaces. We also examine     the equality case. We present an application
of  this main result  to  a  Burbea inequality  which    includes an  isoperimetric type
inequality  as a special  case.
\end{abstract}

\maketitle

\section{Introduction and the main theorem}
\subsection{Introduction} In        this paper we are  interested in a   certain kind of
integral inequalities for analytic functions in Hardy spaces in  the unit polydisc.  One
of such inequalities is contained in a theorem  due to Burbea which is formulated  below.
Before  formulation we  recall what the  generalized Hardy spaces stand   for. We recall
the  definition   of classical  Hardy  spaces in the unit  polydisc in the next  section.

Introduce       firstly the basic notations  which will be used. Let $\mathbf{C}$ be the
complex  plane.  Denote by $\mathbf{U}$ the open  unit disc, i.e.,       the set $\{z\in
\mathbf{C}:|z|<1\}$. For an integer   $n\ge 1$ let $\mathbf{C}^n$       stand    for the
$n$-dimensional complex vector space.                 The direct product $\mathbf{U}^n =
\underbrace{\mathbf{U}\times \dots \times \mathbf{U}}_n$   is the unit    polydisc,  and
$\mathbf{T}^n = \underbrace{\mathbf{T}\times \dots \times   \mathbf{T}}_n$ is  the  unit
torus  in $\mathbf{C}^n$.

Let $\mathbf{Z}_+$  be the set of all non--negative integers. Denote by $\mathbf{Z}^n_+=
\underbrace{\mathbf{Z}_+\times \dots \times \mathbf{Z}_+}_n $         the  set   of  all
multi--indexes. For    any complex number   $q$   the shifted factorial  (the Pochhammer
symbol) is
\begin{equation*}
(q)_\beta= \left\{
\begin{array}{ll}
q(q + 1)\cdots (q + \beta  - 1),   & \hbox{if $\beta\ge1$,} \\
1,                                 & \hbox{if $\beta=0$},
\end{array}
\right.
\end{equation*}
where     $\beta$ is an integer in $\mathbf{Z}_+$. One  may extend   this  definition as
follows. For every $q=(q_1,\dots,q_j,\dots,q_n)\in\mathbf{C}^n$          and    $\alpha=
(\alpha_1,\dots,\alpha_j,\dots,\alpha_n)\in\mathbf{Z}^n_+$                        denote
\begin{equation*}
(q)_\alpha = \prod_{j=1}^n (q_j)_{\alpha_j}.
\end{equation*}

Denote by $\mathbf{R}^n_+$   the  set $\{(x_1,\dots,x_j,\dots, x_m)\in \mathbf{R}^n: x_j
\ge0,\, j=1,\dots,n\}$.

For $q = (q_1,\dots,q_n)\in\mathbf{R}^n_+\backslash\{0\}$ the generalized Hardy space in
the unit    polydisc, denoted by      $H_q(\mathbf{U}^n)$, is the  space of all analytic
functions  $f$  in   $\mathbf{U}^n$ for which the following     norm  is          finite
\begin{equation*}
\|f\|_q^2\ =\sum_{\alpha\in\mathbf{Z}^n_+}\frac{\alpha!}{(q)_\alpha} |a_\alpha|^2,
\end{equation*}
where $a_\alpha =a_\alpha(f),\,  \alpha\in\mathbf{Z}_+^n$ is the    $\alpha$-coefficient
in the Taylor expansion for $f$,                                           i.e., $f(z) =
\sum _{\alpha\in\mathbf{Z}_+^n}a_\alpha z^\alpha$.  The   space $H_q(\mathbf{U}^n)$ is a
reproducing kernel                                         Hilbert space with the kernel
\begin{equation*}
K_q(z,w) = \prod_{j=1}^n \frac 1 {(1-z_j\overline{w}_j)^{q_j}},
\end{equation*}
$z=(z_1,\dots,z_j,\dots,z_n)\in \mathbf{U}^n,\,          w= (w_1,\dots,w_j,\dots,w_n)\in
\mathbf{U}^n$. The details  of the  construction  of    generalized  Hardy spaces, based
on some  facts from theory of reproducing kernels, may be found            in the Burbea
paper~\cite{BURBEA.ILLINOIS.83} in the case of the unit disc;       the case of the unit
polydisc    is given in~\cite{BURBEA.ILLINOIS.87}.     For    the  theory of reproducing
kernels we refer to the                          work of Aronszajn~\cite{ARONSZAJN.TAMS}.

The theorem which  follows serves as a  motivation for our main    result stated  in the
third subsection.

\begin{theorem}[cf.~\cite{BURBEA.ILLINOIS.87}]\label{TH.GENERAL.HARDY.POLYDISC}
Let $q_j\in \mathbf{R}^n_+\backslash\{0\}$ and $f_j(z)\in H_{q_j}(\mathbf{U}^n)$ for all
$j=1,\, 2,\dots,m$, where  $m\ge  2$ is an integer. Denote $q = \sum_{j=1}^m q_j$.  Then
\begin{equation*}
\prod_{j=1}^m f_j\in H_{q}(\mathbf{U}^n)
\end{equation*}
and
\begin{equation*}
\left\|\prod_{j=1}^m f_j\right\|_{q}\ \ \le\ \ \prod_{j=1}^n \|f_j\|_{q_j}.
\end{equation*}
The        equality sign attains if and only if either $\prod_{j=1}^mf_j\equiv0$ or each
$f_j(z)\,  (j=1,\, 2,\dots,m)$                                            is of the form
\begin{equation*}
f_j(z)=C_jK_{q_j}^w(z)
\end{equation*}
for        some (common)                    $w\in\mathbf{U}^n$ and a constant $C_j\ne 0$.
\end{theorem}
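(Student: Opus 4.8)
The plan is to reduce everything to the Taylor coefficients and the explicit form of the norm. Writing $f_j(z)=\sum_{\alpha}a_\alpha^{(j)}z^\alpha$, the product $\prod_{j=1}^m f_j$ has $\gamma$-coefficient equal to the convolution sum $\sum a_{\alpha_1}^{(1)}\cdots a_{\alpha_m}^{(m)}$, where the inner sum runs over all $(\alpha_1,\dots,\alpha_m)\in(\mathbf{Z}_+^n)^m$ with $\alpha_1+\cdots+\alpha_m=\gamma$. First I would expand $\|\prod_j f_j\|_q^2=\sum_\gamma \frac{\gamma!}{(q)_\gamma}|c_\gamma|^2$ and estimate each $|c_\gamma|^2$ by Cauchy--Schwarz, choosing the weights so that the combinatorial factor $\frac{\gamma!}{(q)_\gamma}$ is absorbed correctly.

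The key algebraic identity that makes this work is a multinomial/Pochhammer factorization: for $q=\sum_j q_j$ and $\alpha_1+\cdots+\alpha_m=\gamma$ one has
\begin{equation*}
\frac{\gamma!}{(q)_\gamma}\le \sum_{\alpha_1+\cdots+\alpha_m=\gamma}\ \prod_{j=1}^m\frac{\alpha_j!}{(q_j)_{\alpha_j}},
\end{equation*}
or rather the sharp reciprocal version adapted to Cauchy--Schwarz. Concretely, I would apply Cauchy--Schwarz to the convolution sum in the weighted form
\begin{equation*}
|c_\gamma|^2=\Bigl|\sum_{\alpha_1+\cdots+\alpha_m=\gamma}\prod_j a_{\alpha_j}^{(j)}\Bigr|^2\le\Bigl(\sum_{\alpha_1+\cdots+\alpha_m=\gamma}\prod_j\frac{(q_j)_{\alpha_j}}{\alpha_j!}\Bigr)\Bigl(\sum_{\alpha_1+\cdots+\alpha_m=\gamma}\prod_j\frac{\alpha_j!}{(q_j)_{\alpha_j}}|a_{\alpha_j}^{(j)}|^2\Bigr),
\end{equation*}
and then use the factorization of the Pochhammer symbol to identify the first factor as exactly $\frac{(q)_\gamma}{\gamma!}$. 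Summing over $\gamma$ and interchanging the order of summation would then factor the remaining double sum as $\prod_j\|f_j\|_{q_j}^2$, yielding both membership in $H_q(\mathbf{U}^n)$ and the norm inequality simultaneously.

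The identity $\sum_{\alpha_1+\cdots+\alpha_m=\gamma}\prod_j\frac{(q_j)_{\alpha_j}}{\alpha_j!}=\frac{(q)_\gamma}{\gamma!}$ is the heart of the matter, and I expect verifying it to be the main obstacle; it follows coordinatewise from comparing Taylor coefficients in the functional equation $\prod_j(1-t)^{-q_j}=(1-t)^{-q}$, since $(1-t)^{-q}=\sum_\beta\frac{(q)_\beta}{\beta!}t^\beta$ in each single variable. This reduces the $n$-dimensional statement to the one-variable generating-function identity, handled factor by factor over $j=1,\dots,n$.

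For the equality case I would trace back through the Cauchy--Schwarz step. Equality forces, for each $\gamma$, proportionality of the two vectors indexed by the decompositions $\alpha_1+\cdots+\alpha_m=\gamma$; that is, $\prod_j a_{\alpha_j}^{(j)}$ must be a fixed multiple of $\prod_j\frac{(q_j)_{\alpha_j}}{\alpha_j!}$ across all such decompositions. Unwinding this multiplicativity across all $\gamma$ forces each $a_\alpha^{(j)}$ to have the shape $C_j\frac{(q_j)_\alpha}{\alpha!}\overline{w}^\alpha$ for a common $w\in\mathbf{U}^n$ (or else some $f_j\equiv 0$). Recognizing $\sum_\alpha\frac{(q_j)_\alpha}{\alpha!}\overline{w}^\alpha z^\alpha=K_{q_j}(z,w)=K_{q_j}^w(z)$ by the kernel formula identifies the extremals precisely as the stated $f_j=C_jK_{q_j}^w$, completing the characterization.
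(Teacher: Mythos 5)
Your proof is correct in its essentials, but note that the paper does not actually prove this theorem at all: it is quoted from Burbea (Theorem 4.1 of \cite{BURBEA.ILLINOIS.87}), and the paper's remark points to a proof, for the more general case of Reinhardt domains, obtained by modifying the reproducing-kernel arguments of \cite{BURBEA.TAMS}, whose principal idea is Carleman's method from the theory of minimal surfaces. Your route is therefore genuinely different and entirely self-contained: expand everything in Taylor coefficients, apply weighted Cauchy--Schwarz to the Cauchy product, and invoke the Chu--Vandermonde convolution
\begin{equation*}
\sum_{\alpha_1+\cdots+\alpha_m=\gamma}\ \prod_{j=1}^m\frac{(q_j)_{\alpha_j}}{\alpha_j!}=\frac{(q)_\gamma}{\gamma!},
\end{equation*}
which, exactly as you say, follows coordinatewise from $\prod_j(1-t)^{-q_j}=(1-t)^{-q}$. (Your first displayed inequality is not the statement you need and is not sharp, but the weighted Cauchy--Schwarz form that follows it is the right one, so nothing is lost.) Summing over $\gamma$ and factoring the double sum by Tonelli gives membership in $H_q(\mathbf{U}^n)$ and the norm inequality at once; equality forces Cauchy--Schwarz equality for every $\gamma$, and the unwinding you describe does work, though you should spell it out: first rule out $f_j(0)=0$ (otherwise every proportionality constant $\lambda_\gamma$ vanishes, contradicting $f_j\not\equiv0$ for all $j$), then deduce that the normalized ratios $d^{(j)}_\gamma=\frac{\gamma!}{(q_j)_\gamma}\,a^{(j)}_\gamma$ are independent of $j$ and multiplicative, $\lambda_{\alpha+\beta}=\lambda_\alpha\lambda_\beta$, hence $\lambda_\gamma=\overline{w}^{\gamma}$ with $|w_i|<1$ forced by finiteness of $\|f_j\|_{q_j}$, and finally verify the converse, namely that $f_j=C_jK^w_{q_j}$ do give equality, using $K_q(w,w)=\prod_j K_{q_j}(w,w)$. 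One more piece of bookkeeping to add: coordinates with $q_j^{(i)}=0$, where $(q_j)_\alpha=0$ for $\alpha_i\ge1$ and finiteness of the norm forces the corresponding coefficients of $f_j$ to vanish, so those decompositions drop out of every sum. As for what each approach buys: yours is elementary and makes the constant and the extremals completely explicit, precisely because the polydisc kernels have explicit diagonal power-series expansions with positive coefficients; the kernel-theoretic Carleman-style argument referenced by the paper is less explicit but is the one that generalizes to Reinhardt domains and the unit ball, where this kind of coefficient bookkeeping breaks down.
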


We  have used the following notation to express the      extremal functions. If $F(z,w)$
is any   function   of two variables and if $w$ is fixed,    then $F_w$ and $F^w$ denote
the following   restricted function  $F_{w}(z) =F^{w}(z)  = F(z,w)$. Similar meaning has,
for example,       $F_{zw}(\omega)$ if $F(z,w,\omega)$ is a function of three  variables.

\begin{remark}
The above formulated theorem   is Theorem 4.1           in ~\cite{BURBEA.ILLINOIS.87}. A
generalization of it may be  found in the same paper for   Reinhardt domains. A    proof
in the case of Reinhardt domains may be obtained      modifying the arguments      given
in~\cite{BURBEA.TAMS} for the case  of the unit ball in $\mathbf{C}^n$.    The principal
idea is to  use Carleman's  approach~\cite{CARLEMAN.MATH.Z}     in proving the classical
isoperimetric  inequality for minimal surfaces.
\end{remark}

Let  $dA$ stand  for the area  measure in $\mathbf{C}$. For a real $q>1$ introduce   the
normalized  weighted measure in the unit disc
\begin{equation*}
dA_{q-2} (z) = \frac {q-1}\pi (1-|z|^2)^{q-2}\,dA(z).
\end{equation*}
An  element          $(q,\dots,q)\in\mathbf{R}^n$ will be abbreviated  as  $\mathbf {q}$.
$dA_{\mathbf{q}-\mathbf{2}}$ is the measure on the  unit polydisc $\mathbf{U}^n$   given
by  the product  $\underbrace{dA_{q-2}\times\dots\times dA_{q-2}}_n$.    For $q=1$ it is
convenient to set $dA_{\mathbf{q}-\mathbf{2}} = dA_{-\mathbf{1}}= dm_n$, where $dm_n$ is
the Haar        measure on the unit torus $\mathbf{T}^n$.

For $q \ge 1$    it is not hard to verify                   that  the square of the norm
$\|\cdot\|_{\mathbf {q}}$                               has the integral  representation
\begin{equation}\label{EQ.NORM.GEN.HARDY}
\|f\|^2_{\mathbf {q}} \   = \int |f |^2\, dA_{\mathbf {q} -\mathbf{2}},\quad        f\in
H_{\mathbf{q}}(\mathbf{U}^n).
\end{equation}
In~\eqref{EQ.NORM.GEN.HARDY} we assume integration over $\mathbf{U}^n$   if $q > 1$, and
over  $\mathbf{T}^n$  if $q=1$. In the last case,    the object of    integration is the
radial boundary function for $f$, since the  generalized                     Hardy space
$H_{\mathbf{1}}(\mathbf{U}^n)$ coincides with  the          classical   Hardy      space
$H^2(\mathbf{U}^n)$. The space $H_{\mathbf{q}}(\mathbf{U}^n),\, q>1$            is   the
weighted Bergman space, usually denoted by $L^2_{a,q-2}(\mathbf{U}^n)$.  For $q\in(0,1)$
the     space $H_{\mathbf{q}}(\mathbf{U}^n)$  is  also known   as  the  Bergman--Selberg
space in  the unit polydisc.

One             of         our  aims in  this paper is to establish a theorem similar to
Theorem~\ref{TH.GENERAL.HARDY.POLYDISC}       for analytic functions which belong to the
classical Hardy spaces in the unit polydisc  which do not necessary have   the   Hilbert
structure.  This is done in the last section using the method which will be  established
in the next two sections.

\subsection{Spaces   of analytic functions in the unit polydisc}
Following the          Rudin monograph \cite{RUDIN.BOOK.POLYDISC} we  collect  here  the
definitions and some facts    concerning  the  well known spaces   of analytic functions
in the unit polydisc.

Let us     first say that,  if $(X,\nu)$  is a measure space, then $L^p(X,\nu),\, 0<p\le
\infty$ denotes the  Lebesgue space over  $(X,\nu)$. We write $\|\varphi\|_{L^p(X,\nu)}$
for the norm of $f\in L^p(X,\nu)$.

The     Nevanlinna class $N(\mathbf{U}^n)$ contains all analytic functions $f(z)$ in the
unit                                        polydisc which satisfy the growth  condition
\begin{equation*}
\sup_{0\le r< 1}\int_{\mathbf{T}^n}\log^+ |f(r\zeta )|\, dm_n(\zeta)<\infty.
\end{equation*}
Recall that
\begin{equation*}
\log^+  x  =
 \left\{
\begin{array}{ll}
\log x, & \hbox{if $x> 1$;}\\
0,   & \hbox{if $0<x\le 1$}.
\end{array}\right.
\end{equation*}
In other words, for           the family of functions $\{\mathbf{T}^n\ni\zeta\rightarrow
\log^+|f_r(\zeta)|:0\le r<1\}$,  where  $f_r(z) = f(rz),\, z\in\overline {\mathbf{U}}^n$
is the $r$-dilatation of $f(z)$, is required to lie  in a bounded subset of the Lebesgue
space $L^1(\mathbf{T}^n,m_n)$.  The class $N\sp\ast(\mathbf{U}^n)$ is   the class of all
$f\in N(\mathbf{U}^n)$ for  which  the preceding  family of   functions form a uniformly
integrable family.

We    call a function $\phi(t)$ strongly convex if it is convex on $(-\infty,+\infty),\,
\phi\ge 0,\, \phi$           is non--decreasing, and ${\phi(t)}/t\rightarrow +\infty$ as
$t\rightarrow +\infty$.                 If $\phi$ is a strongly convex function,  define
$H_\phi(\mathbf{U}^n)$ to be the class of all  analytic functions  $f$ in $\mathbf{U}^n$
for  which
\begin{equation*}
\sup_{0\le r<1}\int_{\mathbf{T}^n}  \phi(\log|f(r\zeta)|)\, dm_n(\zeta) <\infty.
\end{equation*}
It happens that the  space  $N\sp\ast(\mathbf{U}^n)$               is the union   of all
$H_\phi(\mathbf{U}^n)$                      (this is the content of        Theorem 3.1.2
in~\cite{RUDIN.BOOK.POLYDISC}).

If $f(z)$  is any function in $\mathbf{U}^n$,  we define its radial boundary    function
$f\sp\ast(\zeta)$ by $f\sp\ast (\zeta)=\lim_{r\rightarrow 1^-} f(r\zeta)$ at every point
$\zeta\in\mathbf{T}^n$ where  the   radial limit exists.   For $f(z)\in N(\mathbf{U}^n)$
it is known that $f\sp\ast(\zeta)$ exists for almost every      $\zeta \in \mathbf{T}^n$.
Moreover,     $\log |f\sp\ast(\zeta)|\in L^1 (\mathbf{T}^n,m_n)$.                 Within
$N\sp\ast(\mathbf{U})$, the $H_\phi$-classes       are characterized   by their boundary
values. Suppose that    $f(z)\in N\sp\ast(\mathbf{U}^n)$ and $\phi$ is  strongly  convex,
then
\begin{equation*}
f(z)\in H_\phi(\mathbf{U}^n)\ \text{if and only if}\ \phi(\log |f\sp\ast(\zeta)|)
\in L^1(\mathbf{T}^n,m_n).
\end{equation*}
If this is the case, then
\begin{equation*}
r\rightarrow \int_{\mathbf{T}^n} \phi(\log|f_r(\zeta)|)\, dm_n(\zeta)
\end{equation*}
is   increasing in $0\le r < 1$ (since $\phi(\log|f_r(\zeta)|)$ is $n$-subharmonic, i.e.,
subharmonic in each variable separately),  and
\begin{equation*}
\lim_{r\rightarrow 1^-}\int_{\mathbf{T}^n}\phi(\log|f_r(\zeta)|)\,    dm_n(\zeta)=
\int_{\mathbf{T}^n} \phi(\log |f\sp\ast(\zeta)|)\, dm_n(\zeta).
\end{equation*}

The           Hardy space in the  unit polydisc $H^p(\mathbf{U}^n)\, (0 <p <\infty)$  is
$H_\phi(\mathbf{U}^n)$ with $\phi(t)=e^{pt}$.                  We write $H^p$ instead of
$H^p(\mathbf{U})$   for the Hardy space in the unit disc. For the theory of Hardy spaces
in the unit disc we refer to Duren's book~\cite{DUREN.BOOK.HP}.   One introduces  a norm
in $H^p(\mathbf{U}^n)$ by
\begin{equation*}
\|f\|_p \, =  \sup_{0\le r<1} M_p(f,r),
\end{equation*}
where we have denoted
\begin{equation*}
M_p(f,r)=\left\{ \int_{\mathbf{T}^n}  |f(r\zeta)|^p  dm_n(\zeta)\right\}^{1/p}.
\end{equation*}
Since     $|f(z)|^p$ is $n$-subharmonic in the expression for $\|f\|_p^p=\sup_{0\le r<1}
M^p_p(f,r)$ we may replace $\sup_{0\le r<1}$   by $\lim_{r\rightarrow 1^-}$.   Therefore,
for $f\in H^p$  we may write                 $\|f\|_p =\|f^*\|_{L^p(\mathbf{T}^n, m_n)}$.

For                 $f\in H^p(\mathbf{U}^n)\, (0<p <\infty)$ we have convergence in mean
\begin{equation*}
\lim_{r\rightarrow 1^-}
\int_{\mathbf{T}^n}|f_r (\zeta)- f\sp\ast(\zeta)|^p\, dm_n(\zeta)= 0.
\end{equation*}
As a consequence, one derives that  every $f(z)\in H^p(\mathbf{U}^n)\,  (1\le p<\infty)$
may be represented as the Poisson integral as well as the Cauchy integral of its  radial
boundary function  $f\sp\ast(\zeta)$.  For example,
\begin{equation*}
f(z)\, =
\int_{\mathbf{T}^n} K(z,\zeta)\, f\sp\ast(\zeta)\, dm_n(\zeta),\quad z\in
\mathbf{U}^n.
\end{equation*}
In the preceding  relation $K(z,\zeta)$ stands for the Cauchy--Szeg\"{o}  kernel for the
unit polydisc given by
\begin{equation*}
K(z,\zeta) = \prod_{j=1}^n\frac1{1-z_j\overline{\zeta}_j},
\end{equation*}
where                         $z=(z_1,\dots,z_j,\dots,z_n )\in\mathbf{U}^n$ and $\zeta =
(\zeta_1, \dots,\zeta_j,\dots,z_n)\in\mathbf{T}^n$.

\subsection{The main result}
In    the   sequel  a weighted measure         in the unit disc is a measure of the form
\begin{equation*}
d\mu(z) =  g(z)\, dA(z),\quad   g(z)>0,\,  z\in \mathbf{U}.
\end{equation*}
A weighted measure in the polydisc    $\mathbf{U}^n$  is a product of $n$ (not necessary
equal)  weighted measures in the unit disc.

The letter $m$ always denotes an  integer $\ge 1$, and  the letter $p$ (with or  without
an index)      any positive number. Let $\Phi:\mathbf{R}^m_+\rightarrow \mathbf{R}_+$ be
continuous and strictly   increasing    in each variable separately,     which  moreover
satisfies $\Phi(x_1,\dots, x_j,\dots,x_m)=0$ if $x_j=0$ for    some $j,\, 1\le j  \le m$.

For     a fixed  weighted measure in the unit disc $\mu$ we will consider  the following
weighted   measure in the unit polydisc $\nu_n = \underbrace{\mu\times\dots\times\mu}_n$.

We    will prove our main  result under an assumption that $\Phi$ and $\mu$ satisfy  the
condition:

\medskip

\noindent($\dag$)  {\it There exist $\tilde{p}_j,\, 0<\tilde{p}_j<\infty,\, j=1,\dots,m$
such that
\begin{equation*}
\Phi(|f_1|^{\tilde{p}_1}\dots,|f_m |^{\tilde{p}_m})\in L^1(\mathbf{U}^n,\nu_n)
\end{equation*}
and
\begin{equation}\label{INEQ.MAIN.THILDA}
\int_{\mathbf {U}^n}
\Phi(|f_1(z)|^{\tilde{p}_1},\dots,|f_m(z)|^{\tilde{p}_m})\, d\nu_n(z)
\le \Phi( \|f_1\|_{\tilde{p}_1}^{\tilde{p}_1},\dots,\|f_m\|_{\tilde{p}_m}^{\tilde{p}_m})
\end{equation}
for all $f_j(z)\in  H^{\tilde{p}_j}(\mathbf{U}^n)\, (j=1,\dots,m)$,    with the equality
sign if and only if either
\begin{enumerate}
\item $\prod_{j=1}^m f_j\equiv 0$   (i.e. $f_j\equiv 0$  for some $j,\, 1\le j\le m$) or
\item each $f_j\, (j=1,\dots,m)$ is equal to $f_j(z)={\Psi_j^n}(z)\not\equiv 0$,   where
$(\dots,\Psi_j^n,\dots)$         belongs to a class denoted by $\mathcal{E}(\Phi,\nu_n)$.
\end{enumerate}
}

\medskip

In       what follows  the  class $\mathcal{E}(\Phi,\nu_n)$ we will  call  the family of
extremals for the  inequality~\eqref{INEQ.MAIN.THILDA}.

\begin{remark}
Note  that if $C_j,\, j=1,\dots,m$ are  constants which satisfy   $|C_j|=1$, and      if
we    take some  $(\dots,\Psi_j^n,\dots)\in \mathcal{E}(\Phi,\nu_n)$,     then we   also
have  $(\dots,C_j\Psi_j^n,\dots)\in\mathcal{E}(\Phi,\nu_n)$.        If $\Phi$  satisfies
$\Phi(\dots,\alpha_jx_j,\dots)    = \alpha_j  \Phi(\dots,x_j,\dots),\, \alpha_j>0,\, x_j
\ge 0,\, j=1,\dots,m$, then the previous conclusion holds for all $C_j>0,\, j=j,\dots,m$.
\end{remark}

Our  main  goal in this paper  is to prove

\begin{theorem}\label{TH.MAIN}
Assume that $\Phi$ and $\mu$      satisfy the condition $(\dag)$. Let $f_j(z)\in H^{p_j}
(\mathbf U^n)$ $(0<p_j<\infty)$ for all $j=1,\dots,m$. Then
\begin{equation*}
\Phi(|f_1 |^{p_1},\dots,|f_m|^{p_m})\in L^1(\mathbf{U}^n,\nu_n)
\end{equation*}
with
\begin{equation}\label{INEQ.MAIN}
\int_{\mathbf {U}^n}         \Phi (|f_1(z)|^{p_1},\dots,|f_m(z)|^{p_m} )\, d\nu_n(z) \le
\Phi(\|f_1\|_{p_1}^{p_1},\dots,\|f_m\|_{p_m}^{p_m}).
\end{equation}
Moreover,
\begin{enumerate}
\item            each extremal $\Psi_j^n,\, j= 1,\dots,m$                        for the
inequality~\eqref{INEQ.MAIN.THILDA}, i.e.,                    $(\dots,\Psi_j^n,\dots)\in
\mathcal{E}(\Phi,\nu_n)$,                       vanishes nowhere in  $\mathbf{U}^n$, and
\item equality  attains in~\eqref{INEQ.MAIN} if and only if either $\prod_{j=1}^m    f_j
\equiv 0$ or  each  $f_j\, (j=1,\dots,m)$          is of  the form             $f_j(z) =
\Psi_{j}^n (z)^{\tilde{p}_j/p_j}$ for some                    $(\dots,\Psi_j^n,\dots)\in
\mathcal{E}(\Phi,\nu_n)$.
\end{enumerate}
\end{theorem}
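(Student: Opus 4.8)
The plan is to reduce the arbitrary exponents $p_j$ to the distinguished exponents $\tilde p_j$ supplied by $(\dag)$ by passing to \emph{outer functions}, which strip away zeros without changing boundary moduli. Given $f_j\in H^{p_j}(\mathbf U^n)$ with $f_j\not\equiv0$, let $F_j$ be the outer function in the polydisc whose radial boundary modulus equals $|f_j^\ast|$, so that $\log|F_j|$ is the $n$-harmonic Poisson extension of $\log|f_j^\ast|$ (see \cite{RUDIN.BOOK.POLYDISC}). Then $F_j$ vanishes nowhere, $\|F_j\|_{p_j}=\|f_j\|_{p_j}$, and the least $n$-harmonic majorant property of $\log|f_j|$ gives the pointwise domination $|f_j(z)|\le|F_j(z)|$ on $\mathbf U^n$, with the quotient $f_j/F_j$ an inner function. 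Since $F_j$ is zero-free on the simply connected $\mathbf U^n$, the power $g_j:=F_j^{\,p_j/\tilde p_j}=\exp\bigl((p_j/\tilde p_j)\log F_j\bigr)$ is a well-defined analytic function with $|g_j|^{\tilde p_j}=|F_j|^{p_j}$; comparing integral means shows $g_j\in H^{\tilde p_j}(\mathbf U^n)$ and $\|g_j\|_{\tilde p_j}^{\tilde p_j}=\|F_j\|_{p_j}^{p_j}=\|f_j\|_{p_j}^{p_j}$.

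With these objects the inequality \eqref{INEQ.MAIN} follows from one chain. Using that $\Phi$ is increasing in each variable together with $|f_j|\le|F_j|$, then $|F_j|^{p_j}=|g_j|^{\tilde p_j}$, then the hypothesis \eqref{INEQ.MAIN.THILDA} for the admissible family $g_j\in H^{\tilde p_j}(\mathbf U^n)$, and finally the norm identities above, I obtain
\begin{align*}
\int_{\mathbf U^n}\Phi(|f_1|^{p_1},\dots,|f_m|^{p_m})\,d\nu_n
&\le \int_{\mathbf U^n}\Phi(|F_1|^{p_1},\dots,|F_m|^{p_m})\,d\nu_n\\
&= \int_{\mathbf U^n}\Phi(|g_1|^{\tilde p_1},\dots,|g_m|^{\tilde p_m})\,d\nu_n\\
&\le \Phi\bigl(\|g_1\|_{\tilde p_1}^{\tilde p_1},\dots,\|g_m\|_{\tilde p_m}^{\tilde p_m}\bigr)
= \Phi\bigl(\|f_1\|_{p_1}^{p_1},\dots,\|f_m\|_{p_m}^{p_m}\bigr).
\end{align*}
In particular the first integrand is dominated by an $L^1(\mathbf U^n,\nu_n)$ function, giving $\Phi(|f_1|^{p_1},\dots,|f_m|^{p_m})\in L^1(\mathbf U^n,\nu_n)$; if some $f_j\equiv0$ both sides vanish and there is nothing to prove.

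Statement (1) follows from the same domination. Let $(\dots,\Psi_j^n,\dots)\in\mathcal E(\Phi,\nu_n)$, so equality holds in \eqref{INEQ.MAIN.THILDA} and no factor is identically zero. If some $\Psi_{j_0}^n$ had a zero, its outer function $G_{j_0}$ would satisfy $|\Psi_{j_0}^n|<|G_{j_0}|$ on a nonempty open set (where the nontrivial inner factor has modulus $<1$); taking $G_j=\Psi_j^n$ for $j\ne j_0$ and using strict monotonicity of $\Phi$ in the $j_0$-th slot on the full-measure set where all factors are nonzero gives $\int_{\mathbf U^n}\Phi(\dots,|\Psi_j^n|^{\tilde p_j},\dots)\,d\nu_n<\int_{\mathbf U^n}\Phi(\dots,|G_j|^{\tilde p_j},\dots)\,d\nu_n\le\Phi(\dots,\|\Psi_j^n\|_{\tilde p_j}^{\tilde p_j},\dots)$, contradicting equality in \eqref{INEQ.MAIN.THILDA}. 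Hence every extremal vanishes nowhere. For (2) assume $\prod_j f_j\not\equiv0$, so the construction applies to all $j$. If equality holds in \eqref{INEQ.MAIN}, both inequalities in the chain are equalities. As zero sets of nonzero holomorphic functions are $\nu_n$-null, $\nu_n$-a.e. point has all $|f_j|,|F_j|>0$, where $\Phi$ is strictly increasing in each argument; equality of the first two integrals together with $|f_j|\le|F_j|$ then forces $|f_j|=|F_j|$ $\nu_n$-a.e. Thus the inner function $f_j/F_j$ has modulus one on a set of positive measure and, being holomorphic of modulus $\le1$, is a unimodular constant $c_j$, so $f_j=c_jF_j$ is zero-free. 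Equality in the second inequality means $(\dots,g_j,\dots)$ attains equality in \eqref{INEQ.MAIN.THILDA} with $\prod_j g_j\not\equiv0$, hence lies in $\mathcal E(\Phi,\nu_n)$; then $F_j=(\Psi_j^n)^{\tilde p_j/p_j}$ and $f_j=(c_j'\Psi_j^n)^{\tilde p_j/p_j}$ with $c_j'$ unimodular solving $(c_j')^{\tilde p_j/p_j}=c_j$, which by the Remark is again an extremal family. The converse is a direct computation: from $f_j=(\Psi_j^n)^{\tilde p_j/p_j}$ and (1) one gets $|f_j|^{p_j}=|\Psi_j^n|^{\tilde p_j}$ and $\|f_j\|_{p_j}^{p_j}=\|\Psi_j^n\|_{\tilde p_j}^{\tilde p_j}$, so \eqref{INEQ.MAIN} reduces to the equality case of \eqref{INEQ.MAIN.THILDA}.

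The main difficulty I anticipate is justifying the outer-function apparatus in several variables: existence of $F_j$ with prescribed boundary modulus, the domination $|f_j|\le|F_j|$ via the least $n$-harmonic majorant, and the rigidity that $f_j/F_j$ is a unimodular constant once the moduli agree on a positive-measure interior set. These are routine in the unit disc but rest on the more delicate factorization theory of $N^\ast(\mathbf U^n)$ from \cite{RUDIN.BOOK.POLYDISC}; once they are in place, the remainder is soft monotonicity of $\Phi$ combined with the change of exponents $p_j\mapsto\tilde p_j$.
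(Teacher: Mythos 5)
Your construction of the outer function $F_j$ is precisely where the argument fails for $n\ge 2$, and this is the central difficulty the whole paper is built to circumvent. In one variable the harmonic extension $u=P[\log|f_j^\ast|]$ automatically has a harmonic conjugate, so $F_j=e^{u+i\tilde u}$ exists; in the polydisc the $n$-harmonic Poisson extension of $\log|f_j^\ast|$ need \emph{not} be the real part of any analytic function (real parts of holomorphic functions are pluriharmonic, a strictly smaller class), so an analytic, zero-free $F_j$ with $\log|F_j|=P[\log|f_j^\ast|]$ need not exist. This happens exactly for the class denoted $\mathrm{RP}(\mathbf{U}^n)$ in \cite{RUDIN.BOOK.POLYDISC}, and a general $f\in H^p(\mathbf{U}^n)$ is not in it; equivalently, inner--outer factorization fails in $H^p(\mathbf{U}^n)$ for $n\ge 2$ \cite{RUBEL.SHIELDS.TOHOKU}. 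With $F_j$ gone, the pointwise domination $|f_j|\le|F_j|$, the branch $g_j=F_j^{p_j/\tilde p_j}$, the zero-freeness argument for extremals in your part (1), and the rigidity step ``$f_j/F_j$ is inner, hence a unimodular constant'' in part (2) all collapse. The paper itself flags this twice: the remark after Theorem~\ref{TH.MAIN.1} states that the absence of a Riesz-type factorization in the polydisc blocks exactly this route, and the remark at the end of Section~3 presents your argument almost verbatim as a ``simple but incomplete proof'' valid only under the extra hypothesis that each $\mathcal{H}_{f_j}$ (the least $n$-harmonic majorant of $\log|f_j|$) lies in $\mathrm{RP}(\mathbf{U}^n)$, in which case a good inner function can play the role of the Blaschke product.

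What the paper does instead is induct on the dimension $n$, with your argument serving (correctly) only as the base case $n=1$, where Riesz factorization is available (Theorem~\ref{TH.MAIN.1}). For the step from $n-1$ to $n$ one slices by Fubini: the inner integral over $\mathbf{U}^{n-1}$ is controlled by the induction hypothesis, which leaves the function $z_n\mapsto\|f_j^{z_n}\|_{p_j}^{p_j}$ to be integrated over the last disc. That function is not the modulus of an analytic function, which is why the paper develops the classes $PL_1$ of logarithmically subharmonic functions: Theorem~\ref{TH.INT.LOGSUB.GENERAL} shows $\|f_j^{z_n}\|_{p_j}^{p_j}\in PL_1$ with norm $\|f_j\|_{p_j}^{p_j}$, and Theorem~\ref{TH.LOGSUBHARMONIC} (proved via Lemma~\ref{LE.HARDY.CLASSES}, which only needs a one-variable outer function) closes the estimate. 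In the equality analysis, zero-freeness of each $f_j$ is deduced from the induction hypothesis and a continuity argument for the sliced integrals \emph{before} any branch $f_j^{p_j/\tilde p_j}\in H^{\tilde p_j}(\mathbf{U}^n)$ is taken; only then is the equality clause of~$(\dag)$ invoked, so no factorization in the polydisc is ever used. To salvage your write-up you would have to either restrict to functions with $\mathcal{H}_{f_j}\in\mathrm{RP}(\mathbf{U}^n)$ (losing the general statement) or replace the outer-function step by this slicing and subharmonic machinery.
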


\begin{remark}
Note   that  if we take  $\Phi (x_1,\dots,x_m) = x_1\cdots x_m$   and  $d\mu = dA_{m-2}$,
then we have $d\nu_n =dA_{\mathbf{m}-2}$, and the condition $(\dag)$ is satisfied in the
Hilbert case, i.e., when  $\tilde{p}_1  = \dots=\tilde{p}_m=2$,  what  states the Burbea
theorem mentioned  in the Introduction.
\end{remark}

\section{Preliminaries for the proof of the main theorem}
\subsection{The one--dimensional case}
The       case $n=1$   of our  Theorem~\ref{TH.MAIN} is straightforward to obtain and we
consider it here. We prove

\begin{theorem}\label{TH.MAIN.1}
Assume that   $\Phi$ and $\mu$ satisfy the condition $(\dag)$. Let $f_j(z)\in  H^{p_j}\,
(0<p_j< \infty)$  for all $j=1,\dots,m$. Then
\begin{equation*}
\Phi(|f_1 |^{p_1},\dots,|f_m |^{p_m})\in L^1(\mathbf{U},\mu)
\end{equation*}
with
\begin{equation}\label{INEQ.MAIN.1}
\int_{\mathbf {U}}  \Phi (|f_1(z)|^{p_1},\dots,|f_m(z)|^{p_m} )\, d\mu(z)\le
 \Phi(\|f_1\|_{p_1}^{p_1},\dots,\|f_m\|_{p_m}^{p_m}).
\end{equation}
Moreover:
\begin{enumerate}
\item Every $\Psi_j^1,\, j=1,\dots,m,\, (\dots,\Psi_j^1,\dots,)\in\mathcal{E}(\Phi,\mu)$,
annihilates nowhere in the unit disc.
\item Equality  attains  in~\eqref{INEQ.MAIN.1} if and only if either $\prod_{j=1}^m f_j
\equiv0$ or each $f_j\, (j=1,\dots,m)$     is of the form                       $f_j(z)=
\Psi_{j}^1(z)^{\tilde{p}_j/p_j}$              for      some   $(\dots,\Psi_j^1,\dots)\in
\mathcal{E}(\Phi,\mu)$.
\end{enumerate}
\end{theorem}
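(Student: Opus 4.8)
The plan is to reduce the arbitrary exponents $p_j$ to the distinguished exponents $\tilde p_j$ furnished by condition $(\dag)$, using the inner--outer factorization as the bridge. First I would dispose of the degenerate case: if some $f_j\equiv 0$ then $\Phi(|f_1|^{p_1},\dots,|f_m|^{p_m})\equiv 0$ and both sides of~\eqref{INEQ.MAIN.1} vanish. So assume every $f_j\not\equiv 0$ and write $f_j=I_jO_j$ with $I_j$ inner and $O_j$ outer. Since an outer function is zero-free, the power $g_j:=O_j^{p_j/\tilde p_j}$ is a well-defined analytic, again outer, function, and it lies in $H^{\tilde p_j}$ because $\int_{\mathbf T}|g_j^\ast|^{\tilde p_j}\,dm_1=\int_{\mathbf T}|O_j^\ast|^{p_j}\,dm_1=\int_{\mathbf T}|f_j^\ast|^{p_j}\,dm_1=\|f_j\|_{p_j}^{p_j}<\infty$. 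Two facts now drive everything: the boundary moduli agree, so $\|g_j\|_{\tilde p_j}^{\tilde p_j}=\|f_j\|_{p_j}^{p_j}$; and, because $|I_j|\le 1$ in $\mathbf U$, one has $|g_j(z)|^{\tilde p_j}=|O_j(z)|^{p_j}\ge|f_j(z)|^{p_j}$ for all $z\in\mathbf U$.

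With this in hand the inequality and integrability are immediate. As $\Phi$ is non-decreasing in each variable, $\Phi(|f_1|^{p_1},\dots,|f_m|^{p_m})\le\Phi(|g_1|^{\tilde p_1},\dots,|g_m|^{\tilde p_m})$ pointwise on $\mathbf U$; integrating against $\mu$ and applying the inequality of $(\dag)$ (case $n=1$) to the tuple $(g_1,\dots,g_m)$ yields
\[
\int_{\mathbf U}\Phi(|f_1|^{p_1},\dots,|f_m|^{p_m})\,d\mu
\le\int_{\mathbf U}\Phi(|g_1|^{\tilde p_1},\dots,|g_m|^{\tilde p_m})\,d\mu
\le\Phi(\|g_1\|_{\tilde p_1}^{\tilde p_1},\dots,\|g_m\|_{\tilde p_m}^{\tilde p_m})
=\Phi(\|f_1\|_{p_1}^{p_1},\dots,\|f_m\|_{p_m}^{p_m}),
\]
which is finite and is exactly~\eqref{INEQ.MAIN.1}.

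For the moreover-part I would argue first that extremals are zero-free. If some $\Psi_{j_0}^1$ had a zero in $\mathbf U$, its inner factor would satisfy $|I_{j_0}|<1$ throughout $\mathbf U$, so replacing $\Psi_{j_0}^1$ by its outer part $O_{j_0}$ leaves the right-hand side of~\eqref{INEQ.MAIN.THILDA} unchanged while, by the \emph{strict} monotonicity of $\Phi$ in the $j_0$-th slot (the remaining arguments being positive off the measure-zero zero sets of analytic functions), strictly increasing the left-hand side; this contradicts that $(\dots,\Psi_{j_0}^1,\dots)\in\mathcal E(\Phi,\mu)$. Hence each extremal is outer and zero-free, so the power $\Psi_j^1(z)^{\tilde p_j/p_j}$ is meaningful. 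The \emph{if} direction of the equality characterization is then a direct substitution: for $f_j=\Psi_j^1{}^{\tilde p_j/p_j}$ one has $|f_j|^{p_j}=|\Psi_j^1|^{\tilde p_j}$ and $\|f_j\|_{p_j}^{p_j}=\|\Psi_j^1\|_{\tilde p_j}^{\tilde p_j}$, turning~\eqref{INEQ.MAIN.1} into the equality~\eqref{INEQ.MAIN.THILDA}. For \emph{only if}, equality in~\eqref{INEQ.MAIN.1} forces both inequalities of the displayed chain to be equalities: equality in the second is equality in $(\dag)$ for $(g_1,\dots,g_m)$, whence each $g_j=\Psi_j^1$ is an extremal; equality in the first gives $\Phi(|f_1|^{p_1},\dots)=\Phi(|g_1|^{\tilde p_1},\dots)$ $\mu$-a.e. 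Comparing one coordinate at a time and invoking strict monotonicity then yields $|f_j|^{p_j}=|g_j|^{\tilde p_j}=|O_j|^{p_j}$ a.e., i.e. $|I_j|=1$ a.e. in $\mathbf U$, so $I_j$ is a unimodular constant $c_j$. Therefore $f_j=c_jO_j=(c_j^{p_j/\tilde p_j}\Psi_j^1)^{\tilde p_j/p_j}$, and by the Remark $(\dots,c_j^{p_j/\tilde p_j}\Psi_j^1,\dots)\in\mathcal E(\Phi,\mu)$, which is the asserted form.

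The hard part will be the passage from the scalar identity $\Phi(|f_1|^{p_1},\dots,|f_m|^{p_m})=\Phi(|g_1|^{\tilde p_1},\dots,|g_m|^{\tilde p_m})$ to the coordinatewise identities $|f_j|^{p_j}=|g_j|^{\tilde p_j}$. This is exactly where one must combine the strictness of the monotonicity of $\Phi$ with the normalization $\Phi(\dots,0,\dots)=0$ and with the fact that the analytic functions in play vanish only on sets of measure zero, so that the telescoping comparison is strict precisely on a full-measure set; the positivity $g>0$ of the density of $\mu$ is what upgrades the a.e.\ pointwise equality into the desired conclusion.
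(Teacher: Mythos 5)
Your proposal is correct and follows essentially the same route as the paper's proof: factor each $f_j$ into a unimodular-boundary factor times a zero-free factor, raise the zero-free factor to the power $p_j/\tilde p_j$ to land in $H^{\tilde p_j}$ with the same norm, and then run the two-step chain through the monotonicity of $\Phi$ and condition $(\dag)$. The paper uses the Riesz factorization $f_j=B_jh_j$ (Blaschke product times zero-free factor) where you use the inner--outer factorization $f_j=I_jO_j$; for this argument the two are interchangeable, since all that is used is $|I_j|\le 1$ in $\mathbf U$ and $|I_j^\ast|=1$ a.e.\ on $\mathbf T$. The one genuine local difference is your proof of part (1): you establish that extremals are zero-free by a perturbation argument (replacing $\Psi_{j_0}^1$ by its outer part fixes the right side of~\eqref{INEQ.MAIN.THILDA} but strictly increases the left side, contradicting $(\dag)$), whereas the paper obtains zero-freeness as a byproduct of the equality analysis: equality forces $\tilde h_j=\tilde\Psi_j^1$, and $\tilde h_j$ is zero-free by construction. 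Both are sound; yours is self-contained, the paper's is shorter. You also make explicit a step the paper glosses over, namely that equality of $\int\Phi(\dots,|f_j|^{p_j},\dots)\,d\mu$ and $\int\Phi(\dots,|g_j|^{\tilde p_j},\dots)\,d\mu$ forces $|I_j|\equiv 1$: as you note, this requires combining the strict monotonicity of $\Phi$ (usable only where the other coordinates are positive, i.e.\ off the measure-zero union of zero sets of the nontrivial analytic functions) with the positivity of the density of $\mu$, and then continuity plus the maximum principle to conclude $I_j$ is a unimodular constant, after which the paper's Remark on unimodular multiples of extremals closes the argument exactly as you do.
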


\begin{proof}
Without lost of generality, suppose that $f_j\not\equiv0$ for all $j=1,\dots,m$. By the
Riesz theorem it  is possible to obtain the       factorization $f_j(z) = B_j(z)h_j(z)$,
where $B_j$ is the Blaschke product for $f_j$.    Recall that  one  takes $B_j\equiv 1$,
if $f_j$ is zero--free. Since  $h_j$   does not vanish in the unit disc, it is possible
to obtain a  branch  $\tilde{h}_j(z)     = h_j(z)^{p_j/\tilde{p}_j}$ there.       Since
$|B_j(z)|\le   1$                                   everywhere in  the disc,    we have
\begin{equation}\label{EQ.FJ.HJ}
|f_j(z)|\le |h_j(z)|,\quad  z\in\mathbf{U}.
\end{equation}
Since        $|B_j(\zeta)|=1$    for almost every $\zeta\in\mathbf{T}$, it follows that
$|h_j(\zeta)| = |f_j(\zeta)|$  a.e. on $\mathbf{T}$. Thus
\begin{equation}\label{EQ.TILDE.HJ}
\|\tilde{h}_j\|^{\tilde{p}_j}_{\tilde{p}_j}  =  \|f_j\|_{p_j}^{p_j},
\end{equation}
which implies $\tilde{h}_j\in H^{\tilde{p}_j}$.

In      view of~\eqref{EQ.FJ.HJ}, since $\Phi$ is increasing in each variable, we  have
\begin{equation}\label{EQ.INT.PHI.FJ}
\int_\mathbf{U} \Phi(\dots, |f_j(z)|^{p_j},\dots) \, d\mu(z)
\le\int_{\mathbf U} \Phi(\dots,|\tilde{h}_j(z)|^{\tilde{p}_j},\dots)\, d\mu(z).
\end{equation}
Regarding                                      the condition  $(\dag)$, we first obtain
$\Phi(\dots,|\tilde{h}_j|^{\tilde{p}_j}|,\dots)\in L^1(\mathbf{U},\mu)$.     This means
that                          both integrals in~\eqref{EQ.INT.PHI.FJ} are finite, hence
$\Phi(\dots,|f_j|^{{p}_j},\dots)\in L^1(\mathbf{U},\mu)$.                       Further,
\begin{equation}\begin{split}\label{EQ.INT.UN.PHI.TILDE.HJ}
\int_{\mathbf U}   \Phi(\dots,|\tilde{h}_j(z)|^{\tilde{p}_j},\dots )\, d\mu(z)\le
\Phi(\dots,\|\tilde{h}_j\|^{p_j}_{p_j},\dots).
\end{split}\end{equation}
Regarding~\eqref{EQ.TILDE.HJ},          the inequality of this theorem follows from the
relations~\eqref{EQ.INT.PHI.FJ}  and~\eqref{EQ.INT.UN.PHI.TILDE.HJ}.

Let                us consider now the second half of this theorem. If equality attains
in~\eqref{INEQ.MAIN.1},        then equality must hold in~\eqref{EQ.INT.UN.PHI.TILDE.HJ}
and~\eqref{EQ.INT.PHI.FJ}.  Applying  the equality statement of ($\dag$), we infer that
equality    holds   in~\eqref{EQ.INT.UN.PHI.TILDE.HJ} if and only if $\tilde{h}_j(z)  =
\tilde{\Psi}_j^1(z),\, j=1,\dots,m$        for  some $(\dots,\tilde{\Psi}_j^1,\dots)\in
\mathcal{E}(\Phi,\mu)$. It  follows  that each    $\tilde{\Psi}_j^1$ is zero--free. Now,
equality holds in~\eqref{EQ.INT.PHI.FJ} if and only if $|B_j(z)|\equiv 1$       for all
$j=1,\dots,m$. This means that
\begin{equation*}
f_j(z) =  C_j \tilde{\Psi}_j^1(z)^{\tilde{p}_j/p_j} =
 \{ \tilde{C_j} \tilde{\Psi}_j^1(z)\}^{\tilde{p}_j/p_j} ,
\end{equation*}
where               $|C_j|=|\tilde{C}_j| = 1$ are     constants (for all $j=1,\dots,m$).
\end{proof}

\begin{remark}
Due  to the non--existence of a direct analogue of the  Riesz factorization theorem for
Hardy   spaces (and for the Nevanlinna space, as well)     in    the unit polydisc, one
cannot   prove our main theorem in  a such easy way  as   in the case $n=1$.    However,
it is possible  to          give a proof  using a  factorization theorem, but with some
constraints.   This  will  be shown at the end of  the next  section where we prove our
main result.
\end{remark}

In our complete    proof of Theorem~\ref{TH.MAIN}    the main role play logarithmically
subharmonic  functions   in the Hardy classes (the    classes $PL_p$ for a positive $p$,
which will be  introduced in the  sequel). Recall, a    function $U$ is logarithmically
subharmonic in a domain    $D$  if $U\equiv 0$, or   if it is  possible to represent it
in the form  $U(z) = e^{u(z)},\, z\in D$, where $u(z)$ is a subharmonic function in $D$.
In the next subsection  we will  need the following           two lemmas concerning the
(logarithmically) subharmonic functions. Their  proofs may be found at    the beginning
of the Ronkin monograph~\cite{RONKIN.BOOK}. Actually,    regarding the following remark,
it  is enough to consider both lemmas for subharmonic  functions.   A function $U(x,y)$
is         logarithmically subharmonic in  $D$  if and only if  $e^{\alpha x  +\beta y}
U(x,y)$     is subharmonic in $D$ for every choice of real numbers $\alpha$ and $\beta$.

\begin{lemma}[cf.~\cite{RONKIN.BOOK}]\label{LE.RONKIN.1}
Let $X$ be a non--empty set and let $\{U_\alpha,\, \alpha\in X\}$        be a family of
(logarithmically) subharmonic functions                           in a domain $D$. Then
\begin{equation*}
u(z)=\sup_{\alpha \in X} U_\alpha(z)
\end{equation*}
is also (logarithmically)   subharmonic in $D$ if it is upper semi--continuous  in this
domain.
\end{lemma}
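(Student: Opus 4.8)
The plan is to invoke the remark immediately preceding the statement to reduce the logarithmically subharmonic case to the subharmonic one, and then to verify directly the two defining properties of a subharmonic function for the supremum. First I would treat the subharmonic case. Recall that a function $u$ on $D$ is subharmonic precisely when it is upper semicontinuous and satisfies the sub--mean value inequality $u(z_0)\le \frac{1}{2\pi}\int_0^{2\pi} u(z_0+re^{i\theta})\,d\theta$ for every closed disc $\overline{B(z_0,r)}\subset D$. Writing $u=\sup_{\alpha\in X}U_\alpha$, the upper semicontinuity of $u$ is exactly the hypothesis of the lemma, so this half is free; in particular $u$ is Borel measurable, and the circle averages on the right--hand side are well defined as elements of $(-\infty,+\infty]$.

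The substance lies in establishing the sub--mean value inequality for $u$. For each fixed $\alpha\in X$, subharmonicity of $U_\alpha$ gives $U_\alpha(z_0)\le \frac{1}{2\pi}\int_0^{2\pi}U_\alpha(z_0+re^{i\theta})\,d\theta$. Since $U_\alpha\le u$ pointwise, I would enlarge the integrand to the common majorant $u$, obtaining $U_\alpha(z_0)\le \frac{1}{2\pi}\int_0^{2\pi}u(z_0+re^{i\theta})\,d\theta$, where the right--hand side no longer depends on $\alpha$. Taking the supremum over $\alpha\in X$ on the left then yields $u(z_0)\le \frac{1}{2\pi}\int_0^{2\pi}u(z_0+re^{i\theta})\,d\theta$, which is the desired inequality. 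The delicate point is precisely this step: one must bound each integrand by the single function $u$ \emph{before} taking the supremum, because the supremum of the averages is not a priori the average of the supremum; replacing $U_\alpha$ by $u$ inside the integral is exactly what makes the interchange legitimate. Together with the given upper semicontinuity, this shows $u$ is subharmonic.

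To pass to the logarithmically subharmonic case I would use the characterization stated in the remark: $U$ is logarithmically subharmonic in $D$ if and only if $e^{ax+by}U(x,y)$ is subharmonic for all real $a$ and $b$. Assuming each $U_\alpha$ is logarithmically subharmonic, each $U_\alpha\ge 0$, and for every fixed pair $a,b$ the function $e^{ax+by}U_\alpha$ is subharmonic. Its supremum over $\alpha$ equals $e^{ax+by}u$, which is upper semicontinuous since $u$ is upper semicontinuous and nonnegative while $e^{ax+by}$ is continuous and positive. Applying the subharmonic case already proved to the family $\{e^{ax+by}U_\alpha\}_{\alpha\in X}$ shows that $e^{ax+by}u$ is subharmonic for every choice of $a,b$, whence $u$ is logarithmically subharmonic. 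The main obstacle throughout is the sup--integral interchange in the second paragraph; the reduction step only requires checking that multiplication by the positive continuous weight $e^{ax+by}$ preserves upper semicontinuity of the nonnegative supremum.
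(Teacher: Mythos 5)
Your proof is correct. Note, however, that the paper does not actually prove this lemma: it cites Ronkin's monograph for the proof and only records, as a remark, the reduction of the logarithmically subharmonic case to the subharmonic one via the characterization that $U$ is logarithmically subharmonic iff $e^{\alpha x+\beta y}U$ is subharmonic for all real $\alpha,\beta$. Your third paragraph reproduces exactly that reduction (including the needed check that multiplication by a positive continuous weight preserves upper semicontinuity of the nonnegative supremum), and your second paragraph supplies the standard argument the paper delegates to the reference: majorize each $U_\alpha$ by $u$ \emph{inside} the circle average before taking the supremum, so that the sub--mean value inequality passes to $u$, with the hypothesis of upper semicontinuity providing both measurability and the remaining half of the definition of subharmonicity. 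One small slip of no consequence: since $u$ is upper semicontinuous it is bounded above on each circle, so the circle averages take values in $[-\infty,+\infty)$ rather than $(-\infty,+\infty]$ as you wrote; the extended--real inequalities you use are valid either way.
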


\begin{lemma}[cf.~\cite{RONKIN.BOOK}]\label{LE.RONKIN.2}
Let $U(z,x)$ be upper semi--continuous  in  $D\times X$,  where  $D$ is  a domain   and
$X$    is a topological space.       Moreover,  let  $\nu$ be a finite  measure on  $X$.
Then
\begin{equation*}
U(z) = \int_X  U(z,x)\,  d\nu(x)
\end{equation*}
is (logarithmically) subharmonic   in $D$, if $U_x$ is (logarithmically) subharmonic in
$D$ for a.e. $x\in X$.
\end{lemma}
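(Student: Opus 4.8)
The plan is to reduce at once to the purely subharmonic assertion and then to verify the two defining properties of a subharmonic function for the integral $U$. For the logarithmic case, the remark preceding the lemma shows it is enough to prove that $e^{\alpha x+\beta y}U(z)$ is subharmonic for every choice of real $\alpha,\beta$ (writing $z=x+iy$). Since $e^{\alpha x+\beta y}U(z)=\int_X e^{\alpha x+\beta y}U(z,x)\,d\nu(x)$ and each slice $e^{\alpha x+\beta y}U_x$ is subharmonic precisely when $U_x$ is logarithmically subharmonic (again by the characterization in the remark), the logarithmic statement follows from the subharmonic one applied to the kernel $(z,x)\mapsto e^{\alpha x+\beta y}U(z,x)$. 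So from now on I assume $U_x$ is subharmonic for a.e.\ $x$ and aim to show that $U$ is upper semi--continuous, $\not\equiv-\infty$, and satisfies the sub--mean--value inequality on every closed disc in $D$.

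First I would establish the sub--mean--value inequality. Fix $z_0\in D$ and $r>0$ with the closed disc $\{|z-z_0|\le r\}\subset D$. For a.e.\ $x$ the slice $U_x$ is subharmonic, so
\begin{equation*}
U(z_0,x)\le\frac1{2\pi}\int_0^{2\pi}U(z_0+re^{i\theta},x)\,d\theta .
\end{equation*}
Integrating in $x$ against $\nu$ and exchanging the order of integration gives
\begin{equation*}
U(z_0)=\int_X U(z_0,x)\,d\nu(x)\le\frac1{2\pi}\int_0^{2\pi}\Big(\int_X U(z_0+re^{i\theta},x)\,d\nu(x)\Big)\,d\theta=\frac1{2\pi}\int_0^{2\pi}U(z_0+re^{i\theta})\,d\theta,
\end{equation*}
which is exactly the required inequality. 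The interchange is a Tonelli step, legitimate once $x\mapsto U(z,x)$ is $\nu$--measurable and $U$ admits an integrable majorant over the compact product (circle)$\times X$; I address these points below. Next I would prove upper semi--continuity of $U$ on $D$. Taking $w_k\to z_0$ in $D$, joint upper semi--continuity gives $\limsup_k U(w_k,x)\le U(z_0,x)$ for every $x$, and the reverse Fatou lemma yields
\begin{equation*}
\limsup_k U(w_k)=\limsup_k\int_X U(w_k,x)\,d\nu(x)\le\int_X U(z_0,x)\,d\nu(x)=U(z_0).
\end{equation*}
Together with the mean--value inequality (and finiteness of the integral at one point) this makes $U$ subharmonic.

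The two analytic points just invoked carry the whole argument, and producing the integrable majorant is the step I expect to be the main obstacle. Measurability is free: joint upper semi--continuity makes $U$ Borel on $D\times X$, so each slice $x\mapsto U(z,x)$ is Borel on $X$. For the majorant, upper semi--continuity guarantees that $U$ is bounded above on every compact subset of $D\times X$; fixing a compact disc $K\subset D$, the natural candidate is $g(x)=\sup_{w\in K}U(w,x)$, which is upper semi--continuous, hence measurable, in $x$. The hypotheses must be read so that $g\in L^1(\nu)$ (equivalently, so that $U(z)<+\infty$, which is forced anyway if $U$ is to be a genuine subharmonic function). Given such a $g$, both earlier interchanges are justified cleanly by writing $U=g-(g-U)$ with $g-U\ge0$: Tonelli applies to the nonnegative part $g-U$ and linearity to the integrable part $g$, and $U(w_k,\cdot)\le g$ for $w_k\in K$ supplies the domination needed for reverse Fatou.

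Finally I would note that this same decomposition shows $U(z)<+\infty$ for all $z\in D$ and, since $D$ is connected, that $U\not\equiv-\infty$ as soon as the integral is finite at a single point; slices that drop to $-\infty$ on polar sets cause no trouble because only the negative part is affected and it is handled by the nonnegative term $g-U$ under Tonelli. This completes the verification that $U$ is subharmonic, and, by the reduction in the first paragraph, logarithmically subharmonic when the slices $U_x$ are.
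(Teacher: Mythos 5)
Your proof is correct, but there is no in-paper proof to compare it against: the paper does not prove Lemma~\ref{LE.RONKIN.2}, deferring instead to the beginning of Ronkin's monograph~\cite{RONKIN.BOOK}, and records only the remark that it suffices to treat the subharmonic case, since $U$ is logarithmically subharmonic exactly when $e^{\alpha x+\beta y}U$ is subharmonic for all real $\alpha,\beta$. Your argument is the standard one and is consistent with that remark: your reduction in the first paragraph is precisely the paper's observation, applied for each fixed pair $(\alpha,\beta)$, and the body of the proof (slice-wise sub-mean-value inequality plus Tonelli, then reverse Fatou with a dominating function for upper semi-continuity) is exactly how the cited source handles it. The point most worth keeping is the caveat you yourself flag: as literally stated, with $X$ an arbitrary topological space and $\nu$ merely finite, joint upper semi-continuity does not bound $U$ above on $K\times X$ (that requires $X$ compact), so your majorant $g(x)=\sup_{w\in K}U(w,x)$, while upper semi-continuous and finite for each $x$, need not be integrable, and $U(z)$ could a priori be $+\infty$; the lemma must be read with this local integrability built in (or with $X$ compact, as in Ronkin), and in the paper's only application of it --- the proof of Theorem~\ref{TH.INT.LOGSUB.GENERAL}, where $X=\mathbf{T}^{n-k}$ is compact and the integrand $|f(r\zeta',z'')|^p$ is continuous for each fixed $r<1$ --- the hypothesis is automatic. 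Two small glosses to tighten in a polished write-up: Tonelli needs product-measurability of $(\theta,x)\mapsto U(z_0+re^{i\theta},x)$, which ``Borel on $D\times X$'' yields when the factors are second countable (true in every application here) but not for completely general $X$; and the conclusion ``subharmonic'' tacitly excludes $U\equiv-\infty$, which your final finiteness remark, together with the nonnegativity of the integrands in the logarithmically subharmonic applications, disposes of.
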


Introduce   now the Hardy classes $PL_p\, (0<p< \infty)$ of logarithmically subharmonic
functions in   the unit disc which  play  a  main role in this paper. The class  $PL_p$
contains  all   continuous  logarithmically     subharmonic  functions $U$ in the  unit
disc such that             $M_p(U,r)$  is bounded in $0\le r<1$.  Since $U^p$  is  also
(logarithmically)   subharmonic in the unit disc, in the definition,        instead  of
boundedness we        could                ask for the existence of the  boundary value
$\lim_{r\rightarrow 1^-}M_p(U,r)$.

It  is known that  every  $U\in {PL}_p$     has  the   radial boundary values at almost
every point in $\mathbf{T}$.     The radial  boundary value of $U\in PL_p$  at a  point
$\zeta\in\mathbf{T}$  will be   denoted  (when exists) by $U\sp \ast(\zeta)$, or simply
by $U(\zeta)$.   It may be  proved that $U(\zeta)\in           L^p(\mathbf{T},m_1)$ and
$\log U(\zeta)  \in L^1(\mathbf{T},m_1)$. One  can also  prove the  convergence in mean
$\|U_r - U^*\|_{L^p(\mathbf{T},m_1)}\rightarrow 0$ as $r\rightarrow 1^-$.  We introduce
a norm  (we say ''norm'', but in fact it is  not in a     strong  sense) in ${PL}_p$ by
\begin{equation*}
\|U\|_p\, =\lim_{r\rightarrow 1^-} M_p(U,r)
=  \left\{ \int_{\mathbf{T}} U(\zeta)^p  dm_1(\zeta)\right\}^{1/p}.
\end{equation*}
For introduced   classes            of subharmonic functions  we refer  to the work  of
Privaloff~\cite{PRIVALOV.USSR}.    See also Yamashita's work~\cite{YAMASHITA.PISA}   for
an interesting  consideration        which concerns the   logarithmically   subharmonic
functions of these classes.

We establish now the following useful  extension of the case $n=1$  of our main theorem.

\begin{theorem}\label{TH.LOGSUBHARMONIC}
Assume that $(\dag)$ holds for $\Phi$ and $\mu$.  Let  $U_j(z)\in PL_1\, (j=1,\dots,m)$.
Then
\begin{equation*}
\Phi(U_1,\dots, U_m)\in L^1(\mathbf{U},\mu)
\end{equation*}
with
\begin{equation*}
\int_{\mathbf{U}} \Phi(U_1(z),\dots, U_m(z))\, d\mu(z)\le
 \Phi(\|U_1\|_1,\dots,\|U_m\|_1).
\end{equation*}
Equality   attains if and only if either there exists $j_0,\, 1\le  j_0\le m$ such  that
$U_{j_0}\equiv0$ or each  $U_j\, (j=1,\dots,m)$                 is of the form $$U_j(z)=
|\Psi_{j}^1(z)| ^{\tilde{p}_j}$$                   for some   $(\dots,\Psi_j^1,\dots)\in
\mathcal{E}({\Phi,\mu})$.
\end{theorem}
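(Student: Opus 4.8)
The plan is to deduce the statement from hypothesis $(\dag)$ in the case $n=1$ (where $\nu_1=\mu$), applied to analytic functions that majorize the $U_j$. If $U_{j_0}\equiv0$ for some $j_0$, then $\Phi(U_1,\dots,U_m)\equiv0$ and $\|U_{j_0}\|_1=0$, so both sides vanish and equality holds; thus I may assume $U_j\not\equiv0$ for every $j$. Each $U_j\in PL_1$ then has boundary values $U_j^\ast\in L^1(\mathbf{T},m_1)$ with $\log U_j^\ast\in L^1(\mathbf{T},m_1)$ and $\|U_j\|_1=\int_{\mathbf{T}}U_j^\ast\,dm_1>0$. I introduce the zero--free outer functions
\begin{equation*}
f_j(z)=\exp\left\{\frac1{\tilde{p}_j}\int_{\mathbf{T}}\frac{\zeta+z}{\zeta-z}\,\log U_j^\ast(\zeta)\,dm_1(\zeta)\right\},
\end{equation*}
so that $|f_j^\ast|^{\tilde{p}_j}=U_j^\ast$ a.e., $\|f_j\|_{\tilde{p}_j}^{\tilde{p}_j}=\int_{\mathbf{T}}U_j^\ast\,dm_1=\|U_j\|_1$, and hence $f_j\in H^{\tilde{p}_j}$.

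The crux is the pointwise majorization $U_j(z)\le|f_j(z)|^{\tilde{p}_j}$ on $\mathbf{U}$, equivalently $\log U_j\le P[\log U_j^\ast]=\log|f_j|^{\tilde{p}_j}$, where $P[\cdot]$ denotes the Poisson integral. This is the step I expect to be the main obstacle, because $\log U_j$ is only subharmonic and a direct global comparison with the Poisson integral of its boundary values could fail if $\log U_j$ carried a positive singular boundary mass. I would circumvent this by a dilation argument: for $|z|<\rho<1$ the function $\log U_j$ is continuous and subharmonic on $\overline{\rho\mathbf{U}}$, so it is dominated there by the Poisson integral of its continuous boundary data $\log U_j(\rho\,\cdot)$. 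Letting $\rho\to1^-$ and using that the radial limits give $\log U_j(\rho\zeta)\to\log U_j^\ast(\zeta)$ a.e., that $\{\log^+U_{j,\rho}\}_\rho$ is uniformly integrable (since $\log^+U_{j,\rho}\le U_{j,\rho}$ and $U_{j,\rho}\to U_j^\ast$ in $L^1$), and Fatou's lemma on the negative part, I obtain $\limsup_{\rho\to1}$ of the dilated Poisson integrals $\le P[\log U_j^\ast](z)$, which yields the asserted bound. As $f_j$ is zero--free, $|f_j|^{\tilde{p}_j}>0$ throughout, so the bound is strict precisely on the (polar, hence $\mu$--null) zero set of $U_j$.

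Granting the majorization, the inequality is immediate. Since $\Phi$ increases in each variable, $\Phi(U_1,\dots,U_m)\le\Phi(|f_1|^{\tilde{p}_1},\dots,|f_m|^{\tilde{p}_m})$ pointwise; the right--hand side belongs to $L^1(\mathbf{U},\mu)$ by $(\dag)$, so $\Phi(U_1,\dots,U_m)\in L^1(\mathbf{U},\mu)$, and
\begin{equation*}
\int_{\mathbf{U}}\Phi(U_1,\dots,U_m)\,d\mu\le\int_{\mathbf{U}}\Phi(|f_1|^{\tilde{p}_1},\dots,|f_m|^{\tilde{p}_m})\,d\mu\le\Phi(\|U_1\|_1,\dots,\|U_m\|_1),
\end{equation*}
the second inequality being \eqref{INEQ.MAIN.THILDA} for $n=1$ together with $\|f_j\|_{\tilde{p}_j}^{\tilde{p}_j}=\|U_j\|_1$.

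For the equality case I trace the two inequalities. Equality forces equality in \eqref{INEQ.MAIN.THILDA}; as $\|f_j\|_{\tilde{p}_j}>0$, the non--trivial alternative of $(\dag)$ applies and $f_j=\Psi_j^1$ with $(\dots,\Psi_j^1,\dots)\in\mathcal{E}(\Phi,\mu)$. It also forces $\Phi(U_1,\dots,U_m)=\Phi(|f_1|^{\tilde{p}_1},\dots,|f_m|^{\tilde{p}_m})$ $\mu$--a.e.; since every $|f_j|^{\tilde{p}_j}>0$ and $\Phi$ is strictly increasing in each variable, at each point where all $U_j>0$ this gives $U_j=|f_j|^{\tilde{p}_j}$, while the points where some $U_j$ vanishes form a $\mu$--null set. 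By continuity of $U_j$ and of $|f_j|^{\tilde{p}_j}$ the equality $U_j=|f_j|^{\tilde{p}_j}=|\Psi_j^1|^{\tilde{p}_j}$ then holds on all of $\mathbf{U}$. Conversely, if $U_j=|\Psi_j^1|^{\tilde{p}_j}$ for extremals $\Psi_j^1$, then taking $f_j=\Psi_j^1$ turns both inequalities into equalities. This proves the equality statement.
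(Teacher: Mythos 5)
Your proof is correct, but it takes a genuinely different route from the paper's. The paper's proof first invokes Lemma~\ref{LE.HARDY.CLASSES} to produce majorants $f_j\in H^1$ with $U_j\le |f_j|$ in $\mathbf{U}$ and $U_j=|f_j|$ a.e.\ on $\mathbf{T}$, and then applies the already-established one-dimensional main theorem (Theorem~\ref{TH.MAIN.1}) with all exponents equal to $1$; that theorem in turn rests on the Riesz--Blaschke factorization. You instead build the exponent into the outer function, getting $f_j\in H^{\tilde{p}_j}$ with $U_j\le |f_j|^{\tilde{p}_j}$ and $\|f_j\|_{\tilde{p}_j}^{\tilde{p}_j}=\|U_j\|_1$, and then invoke the hypothesis $(\dag)$ (inequality~\eqref{INEQ.MAIN.THILDA}) directly, bypassing both Theorem~\ref{TH.MAIN.1} and the Riesz factorization. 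This is exactly the alternative proof that the paper itself sketches in the remark following the theorem (``using only the assumed condition $(\dag)$ for outer functions\dots''), so your route is legitimate and even anticipated; what it buys is independence from the factorization machinery, at the cost of redoing the majorization by hand. Two further observations. First, your treatment of the key step $\log U_j\le P[\log U_j^{\ast}]$ is more careful than the paper's: Lemma~\ref{LE.HARDY.CLASSES} asserts this from subharmonicity alone, which in general is insufficient (a positive singular part in the Riesz--Herglotz measure of the least harmonic majorant must be excluded -- compare the Poisson kernel itself); your dilation argument, with uniform integrability of $\{\log^{+}U_{j,\rho}\}$ supplied by the $L^1$-convergence $U_{j,\rho}\to U_j^{\ast}$ and Fatou on the negative parts, is precisely what closes this gap. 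Second, your equality analysis (strict monotonicity of $\Phi$ off the polar, hence $\mu$-null, zero sets, then continuity) reaches the same conclusion as the paper's, which instead leans on the zero-freeness of the extremals $\Psi_j^1$ to force $U_j=|f_j|$ pointwise; both are sound, and both correctly yield $U_j=|\Psi_j^1|^{\tilde{p}_j}$.
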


The following  known   lemma will be useful in the proof of the above result. The proof
is given for   the  sake of completeness.

\begin{lemma}\label{LE.HARDY.CLASSES}
For   every $U(z)\in PL_p$ there exists $f(z)\in H^p$    such that $U(z)\le |f(z)|$ for
$z\in\mathbf{U}$ and $U(\zeta)=|f(\zeta)|$ for  a.e. $\zeta  \in\mathbf{T}$.
\end{lemma}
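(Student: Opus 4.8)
The plan is to take $f$ to be the outer function whose boundary modulus equals that of $U$. Assume $U\not\equiv0$, otherwise $f\equiv0$ works. Since $U\in PL_p$, the radial boundary function $U^*(\zeta)$ exists for a.e. $\zeta\in\mathbf{T}$, belongs to $L^p(\mathbf{T},m_1)$, and satisfies $\log U^*\in L^1(\mathbf{T},m_1)$, as recalled above. I would then set
\begin{equation*}
f(z)=\exp\left(\int_{\mathbf{T}}\frac{\zeta+z}{\zeta-z}\,\log U^*(\zeta)\,dm_1(\zeta)\right),\qquad z\in\mathbf{U}.
\end{equation*}
This function is analytic in $\mathbf{U}$, and by the standard theory of outer functions one has $|f^*(\zeta)|=U^*(\zeta)$ for a.e. $\zeta\in\mathbf{T}$, which is exactly the required boundary identity $U(\zeta)=|f(\zeta)|$. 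Writing $P(z,\zeta)$ for the Poisson kernel of the unit disc, the modulus satisfies $\log|f(z)|=\int_{\mathbf{T}}P(z,\zeta)\log U^*(\zeta)\,dm_1(\zeta)$, so $\log|f|$ is harmonic in $\mathbf{U}$.

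To see that $f\in H^p$, note that $|f|^p$ is subharmonic and its radial boundary values are $(U^*)^p\in L^1(\mathbf{T},m_1)$; hence $M_p(f,r)$ stays bounded and in fact $\|f\|_p=\|U\|_p<\infty$.

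The crucial step is the interior domination $U(z)\le|f(z)|$. At points where $U(z)=0$ this is trivial, so fix $z$ with $U(z)>0$; there it suffices to show $\log U(z)\le\log|f(z)|$, where $\log U$ is subharmonic in $\mathbf{U}$. I would compare the two sides through dilations: for $0<r<1$ the function $\log U(r\,\cdot)$ is subharmonic on a neighbourhood of $\overline{\mathbf{U}}$, and is therefore dominated there by the Poisson integral of its own boundary values,
\begin{equation*}
\log U(rz)\le\int_{\mathbf{T}}P(z,\zeta)\,\log U(r\zeta)\,dm_1(\zeta),\qquad z\in\mathbf{U}.
\end{equation*}
Letting $r\to1^-$, the left-hand side tends to $\log U(z)$ by continuity of $U$, while the right-hand side tends to $\int_{\mathbf{T}}P(z,\zeta)\log U^*(\zeta)\,dm_1(\zeta)=\log|f(z)|$. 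This yields $\log U(z)\le\log|f(z)|$, hence $U(z)\le|f(z)|$ everywhere, completing the lemma.

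The main obstacle is justifying the convergence of the right-hand integrals, which amounts to the $L^1$-convergence $\log U_r\to\log U^*$ on $\mathbf{T}$. This is where the $PL_p$-theory enters: the radial limits exist a.e. with $\log U^*\in L^1(\mathbf{T},m_1)$, and the circular means $r\mapsto\int_{\mathbf{T}}\log U(r\zeta)\,dm_1(\zeta)$ of the subharmonic function $\log U$ are non-decreasing and bounded above by $\int_{\mathbf{T}}\log U^*\,dm_1$. I would combine these facts, together with Fatou's lemma applied to the positive and negative parts of $\log U_r$, to obtain the required $L^1$-convergence, after which the limit passage above is immediate.
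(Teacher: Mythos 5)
Your construction coincides with the paper's: take the outer function $f$ whose boundary modulus is $U^{*}$, note $|f^{*}|=U^{*}$ a.e.\ on $\mathbf{T}$, and reduce the lemma to the interior estimate $\log U(z)\le\int_{\mathbf{T}}P(z,\zeta)\log U^{*}(\zeta)\,dm_1(\zeta)=\log|f(z)|$. The only difference is that the paper simply quotes this estimate as a known property of the Privalov classes ("since $\log U$ is subharmonic\dots"), whereas you try to derive it from the dilation inequality $\log U(rz)\le\int_{\mathbf{T}}P(z,\zeta)\log U(r\zeta)\,dm_1(\zeta)$, which is a legitimate self-contained route; the dilation inequality itself is correct.

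The gap is in your limit passage. You claim the right-hand side \emph{tends to} $\int_{\mathbf{T}}P(z,\zeta)\log U^{*}\,dm_1$ and reduce this to the ``$L^1$-convergence $\log U_r\to\log U^{*}$''; that convergence is false in general. Take $U(z)=\bigl|\exp\bigl(-(1+z)/(1-z)\bigr)\bigr|$, the modulus of a singular inner function: $U$ is continuous, bounded by $1$, and $\log U(z)=-P(z,1)$ is harmonic, so $U\in PL_p$; its radial boundary values are $U^{*}=1$ a.e., so $\log U^{*}=0$ a.e., yet $\int_{\mathbf{T}}\log U_r\,dm_1\equiv-1$ for every $r$. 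For this $U$ the right-hand side of your dilation inequality equals $\log U(rz)$ and converges to $\log U(z)=-P(z,1)<0=\log|f(z)|$, so neither the asserted convergence nor equality in the limit holds. What saves the argument is that you only need the one-sided bound $\limsup_{r\to1^-}\int_{\mathbf{T}}P(z,\zeta)\log U_r\,dm_1\le\int_{\mathbf{T}}P(z,\zeta)\log U^{*}\,dm_1$, and this does follow from the tools you list, correctly deployed: write $\log U_r=\log^{+}U_r-\log^{-}U_r$; Fatou's lemma applied to the nonnegative functions $\log^{-}U_r$ gives $\liminf_r\int P\log^{-}U_r\,dm_1\ge\int P\log^{-}U^{*}\,dm_1$, while for the positive parts $\log^{+}U_r\le p^{-1}U_r^{p}$ and the mean convergence $\|U_r-U^{*}\|_{L^p(\mathbf{T},m_1)}\to0$ (quoted in the paper) make the family $\{U_r^{p}\}$, hence $\{\log^{+}U_r\}$, uniformly integrable, so that $\int P\log^{+}U_r\,dm_1\to\int P\log^{+}U^{*}\,dm_1$ by Vitali's theorem. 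Chaining $\log U(z)=\lim_{r\to1^-}\log U(rz)\le\limsup_{r\to1^-}\int P\log U_r\,dm_1\le\int P\log U^{*}\,dm_1$ closes the proof. A smaller repair: your justification that $f\in H^{p}$ (``subharmonic with $L^{1}$ boundary values, hence bounded means'') is not valid reasoning, as $\exp\bigl((1+z)/(1-z)\bigr)$ shows; for outer functions one uses Jensen's inequality, $|f(z)|^{p}=\exp\int P\log (U^{*})^{p}\,dm_1\le\int P\,(U^{*})^{p}\,dm_1$, which yields $M_p(f,r)\le\|U^{*}\|_{L^p(\mathbf{T},m_1)}$.
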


\begin{proof}
Let     $U(z)\in PL_p$ and suppose w.l.g. that $U\not\equiv 0$. Since  $U(\zeta)\in L^p
(\mathbf{T},m_1)$ and $\log U(\zeta)\in L^1(\mathbf{T},m_1)$,  consider   the     outer
function in $H^p$ given by
\begin{equation*}
f(z)=
\exp \left \{ \int_\mathbf{T} \frac{\zeta+z}{\zeta-z}\, \log U(\zeta)\,
 dm_1(\zeta)\right\}.
\end{equation*}
It is clear that $|f(\zeta)| = U(\zeta)$ for  a.e. $\zeta\in\mathbf{T}$.    Since $\log
U(z)$  is  subharmonic in the unit disc, we have
\begin{equation*}
\log U(z)\le \frac 1{2\pi} \int_0^{2\pi} P(r,\theta-t)\, \log U(e^{it})\, dt
=\log |f(z)|\quad (z=re^{i\theta}).
\end{equation*}
It follows $U(z)\le |f(z)|$ for $z\in\mathbf{U}$.                 Here, $P(r,\theta-t)=
\mathrm {Re}\,(({\zeta+z})/({\zeta-z})),\, \zeta=e^{it}$          is the Poisson kernel.
\end{proof}

\begin{proof}[Proof of Theorem~\ref{TH.LOGSUBHARMONIC}]
W.l.g. assume that  $U_j\not\equiv 0$ for all $j=1,\dots,m$.               According to
Lemma~\ref{LE.HARDY.CLASSES},  there exists (an outer function) $f_j\in H^1$  such that
\begin{equation}\label{TH.LOGSUBHARMONIC.1}
U_j(z)\le |f_j(z)|,\quad  z\in\mathbf{U}
\end{equation}
and
\begin{equation}\label{TH.LOGSUBHARMONIC.2}
U_j(\zeta) = |f_j(\zeta)|\  \ \text{for a.e.}\ \ \zeta\in\mathbf{T}
\end{equation}
In     Theorem~\ref{TH.MAIN.1}        take the above  functions $f_j\, (j = 1,\dots,m)$.
Using~\eqref{TH.LOGSUBHARMONIC.1} and~\eqref{TH.LOGSUBHARMONIC.2},  and monotonicity of
$\Phi$ we obtain
\begin{equation}\label{TH.LOGSUBHARMONIC.BOTH}
\begin{split}
\int_{\mathbf{U}} \Phi(\dots,U_j(z),\dots)\,  d\mu(z)& \le
\int_{\mathbf{U}} \Phi(\dots,|f_j(z)|,\dots)\, d\mu(z)
\\& \le  \Phi(\dots,\|f_j\|_1,\dots) = \Phi(\dots,\|U_j\|_1,\dots),
\end{split}
\end{equation}
what proves  the inequality we need.

Equality holds at the second place of~\eqref{TH.LOGSUBHARMONIC.BOTH}   (regarding   the
equality statement of $(\dag)$) if and  only if    each $f_j\ (j=1,\dots, m)$ is of the
form $f_j= \Psi^1_j$ for some $(\dots,\Psi_j^1,\dots)\in\mathcal{E}(\Phi,\mu)$. In this
case $|f_j(z)|$ does not  vanish anywhere                   in the  unit  disc. In view
of~\eqref{TH.LOGSUBHARMONIC.1}, it   follows that  equality occurs at the   first place
of~\eqref{TH.LOGSUBHARMONIC.BOTH}  if and only  if  $U_j(z)=|f_j(z)|$   for   all $z\in
\mathbf{U}$ and $j=1, \dots,m$.  All together, equality attains at   both places if and
only if
\begin{equation*}
U_j(z)  = |f_j(z) | = |\Psi_j^1(z)|,\quad z\in\mathbf{U}
\end{equation*}
for all $j=1,\dots,m$.
\end{proof}

\begin{remark}
Since
\begin{equation*}
\{U=|f|^p:f\in H^p\}\subseteq {PL}_1
\end{equation*}
note      that Theorem~\ref{TH.LOGSUBHARMONIC} may be seen  as a generalization of the
main theorem   in the case  $n=1$.    The equality statement of Theorem~\ref{TH.MAIN.1}
could        be  derived from the equality statement of Theorem~\ref{TH.LOGSUBHARMONIC}
having in  mind the elementary fact: if $\varphi$ and $\psi$    are analytic functions
in the unit disc and if    $|\varphi(z)|=|\psi(z)|$  for all  $z\in\mathbf{U}$,   then
$\varphi= \alpha \psi$ for  some  constant $\alpha$ of the unit modulus.

Note also that we could  prove Theorem~\ref{TH.LOGSUBHARMONIC}  using only the assumed
condition $(\dag)$ for outer function given     by $U_j^{1/\tilde{p}_j}\in PL_{p_j},\,
j=1,\dots,m$     (in view of Lemma~\ref{LE.HARDY.CLASSES}). That proof would be of the
same length, and it is of some  interest   to observe that we  can       prove the one
dimensional case of our main theorem  without      the Riesz factorization theorem for
Hardy spaces in the unit disc.
\end{remark}

\subsection{A theorem on restricted analytic functions}
Besides Theorem~\ref{TH.MAIN.1} and Theorem~\ref{TH.LOGSUBHARMONIC} for the    complete
proof of Theorem~\ref{TH.MAIN} we will need       some additional results  which are of
interest on their   own right. Here we will firstly  collect  some facts concerning the
Cald{e}r\'{o}n--Zygmund theorem    on iterated limits of analytic functions in the unit
polydisc. We follow mostly the work  of Davis~\cite{DAVIS.TAMS}.

If                         $f\in  N(\mathbf{U}^n)$ ($f\in H_\phi(\mathbf{U}^n))$    and
$(z_{j_1}, \dots,z_{j_k})\in\mathbf{U}^{k},\,1 \le k \le n- 1,\, j_1<\dots<j_k$    then
for the restricted function $f_ {z_{j_1}\dots z_{j_k}}$ there holds   $f_ {z_{j_1}\dots
z_{j_k}}\in N(\mathbf{U}^k)$    ($f_ {z_{j_1}\dots z_{j_k}}\in H_\phi (\mathbf{U}^k ))$.
This  is seen most easily  by         using the  $n$-harmonic  majorant for $\log^+|f|$
($\phi(\log|f|)$).

For                           $f\in N(\mathbf{U}^n)$ and $\zeta_1\in\mathbf{T}$  denote
\begin{equation*}
f_{\zeta_1}(z_2,\dots,z_n)\, =  \lim_{r\rightarrow 1^-}f(r\zeta_1,z_2,\dots,z_n).
\end{equation*}
Whenever this limit  exists,      it defines an analytic function on $\mathbf{U}^{n-1}$.
Zygmund~\cite{ZYGMUND.FUND}   proved:  If $f\in N (\mathbf{U}^n)$ then  $f_{\zeta_1}\in
N(\mathbf{U}^{n-1})$  for almost every $\zeta_1\in\mathbf{T}$.          For $\zeta_1\in
\mathbf{T}$                    satisfying the  previous condition  we may then consider
\begin{equation*}
f_{\zeta_1\zeta_2}(z_3,\dots,z_n) \,
= \lim_{r\rightarrow 1^-} f_{\zeta_1} (r\zeta_2,z_3,\dots,z_n)\in N(\mathbf{U}^{n-2})
\end{equation*}
for    almost every $\zeta_2\in \mathbf{T}$.     Continuing in this manner we arrive at
\begin{equation*}
f_{\zeta_1\dots \zeta_n}
= \lim_{r\rightarrow 1^-} f_{\zeta_1\dots\zeta_{n-1}} (r\zeta_n),
\end{equation*}
whenever  this limit exists. In~\cite{DAVIS.TAMS} Davis showed  that if  $f\in N\sp\ast
(\mathbf{U}^n)$, then    the iterated  limits of $f$ are almost everywhere  independent
of the order of iteration. In fact, the iterated limit and the radial   limit are equal
almost everywhere.         A similar theorem holds for functions belonging to the space
$H_\phi$ (as a   consequence).  Zygmund~\cite{ZYGMUND.FUND} proved the preceding result,
but for $f\in N_{n-1}(\mathbf{U}^n)\supseteq N\sp\ast(\mathbf{U}^n)$.           For the
description of the class $N_{n-1}(\mathbf{U}^n)$           see the Zygmund paper.  Then
Calder\'{o}n and Zygmund posed the question whether $N_{n-1}(\mathbf{U}^n)$ may      be
replaced by $N(\mathbf{U}^n)$. The result of Davis gives a partial answer enough    for
our needs.

In the sequel  we will need only  the following proposition. A proof follows from   the
above discussion.

\begin{proposition}\label{TH.CALDERON.ZYGMUND}
Let $f(z)=f(z_1,\dots,z_n)\in H_\phi(\mathbf{U}^n)$.     Then for an integer $1\le k\le
n-1$ and mutually disjoint integers $\{j_1,\dots, j_k\}\subseteq\{1,\dots,n\}$  we have
\begin{enumerate}
\item    $f_{z_{j_1}\dots z_{j_k}}\in H_\phi(\mathbf{U}^{n-k})$ for all $(z_{j_1},\dots,
z_{j_k})\in\mathbf{U}^{k}$;
\item  $f_{\zeta_{j_1}\dots \zeta_{j_k}}\in H_\phi(\mathbf{U}^{n-k})$ for almost  every
$(\zeta_{j_1},\dots,\zeta_{j_k})\in\mathbf{T}^{k}$.
\end{enumerate}
Particularly,  the iterated boundary function $f_{\zeta_{j_1}\dots \zeta_{j_k}}$ is the
same as the  radial boundary function $f\sp\ast_{\zeta_{j_1}\dots \zeta_{j_k}}$     for
almost every $(\zeta_{j_1},\dots,\zeta_{j_k})\in \mathbf{T}^k$.
\end{proposition}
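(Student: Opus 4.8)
The plan is to treat the two parts by different mechanisms: part (1), where the frozen coordinates lie in the open disc, by a harmonic-majorant estimate, and part (2), where the frozen coordinates are pushed to the torus, by the Zygmund--Davis iterated-limit theory combined with Tonelli's theorem. The final (``Particularly'') clause will come out of the Davis identification used in part (2).

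For part (1), first I would record that $\phi(\log|f|)$ is nonnegative and $n$-subharmonic: $\log|f|$ is subharmonic in each variable separately (since $f$ is analytic), and $\phi$ is convex and nondecreasing, so the composition is $n$-subharmonic. Because $f\in H_\phi(\mathbf{U}^n)$ the means $\int_{\mathbf{T}^n}\phi(\log|f(r\zeta)|)\,dm_n(\zeta)$ are bounded, and hence $\phi(\log|f|)$ admits a nonnegative $n$-harmonic majorant $h$ on $\mathbf{U}^n$ (this is exactly the tool indicated in the discussion above). Freezing $z_{j_1},\dots,z_{j_k}\in\mathbf{U}$, the restriction $h_{z_{j_1}\dots z_{j_k}}$ is a nonnegative $(n-k)$-harmonic function dominating $\phi(\log|f_{z_{j_1}\dots z_{j_k}}|)$. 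Integrating a nonnegative $(n-k)$-harmonic function over $\mathbf{T}^{n-k}$ one variable at a time and using the one-variable mean value property gives
\[
\int_{\mathbf{T}^{n-k}} h_{z_{j_1}\dots z_{j_k}}(r\eta)\,dm_{n-k}(\eta)=h_{z_{j_1}\dots z_{j_k}}(0),
\]
independently of $r$. Thus the $\phi$-means of $f_{z_{j_1}\dots z_{j_k}}$ are bounded by the finite constant $h_{z_{j_1}\dots z_{j_k}}(0)$, which proves $f_{z_{j_1}\dots z_{j_k}}\in H_\phi(\mathbf{U}^{n-k})$ for every interior choice of the frozen coordinates.

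For part (2), the starting point is the inclusion $H_\phi(\mathbf{U}^n)\subseteq N^{\ast}(\mathbf{U}^n)$ (recall $N^{\ast}=\bigcup_\phi H_\phi$). Applying Zygmund's theorem $k$ times along the directions $j_1,\dots,j_k$, the iterated boundary function $f_{\zeta_{j_1}\dots\zeta_{j_k}}$ exists, is analytic on $\mathbf{U}^{n-k}$, and lies in $N^{\ast}(\mathbf{U}^{n-k})$ for almost every $(\zeta_{j_1},\dots,\zeta_{j_k})\in\mathbf{T}^{k}$; moreover Davis' theorem ensures this iterated limit is independent of the order of iteration and agrees almost everywhere with the radial limit. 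Consequently the radial boundary values of $f_{\zeta_{j_1}\dots\zeta_{j_k}}$ coincide with the restriction $f^{\ast}_{\zeta_{j_1}\dots\zeta_{j_k}}$ of the radial boundary function of $f$, which already settles the concluding clause of the proposition.

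It then remains only to upgrade $N^{\ast}$-membership to $H_\phi$-membership, and here I would invoke the boundary-value characterization of $H_\phi$ inside $N^{\ast}$: for $g\in N^{\ast}(\mathbf{U}^{n-k})$ one has $g\in H_\phi(\mathbf{U}^{n-k})$ if and only if $\phi(\log|g^{\ast}|)\in L^1(\mathbf{T}^{n-k})$. Since $f\in H_\phi(\mathbf{U}^n)$, the monotone convergence of the means gives $\int_{\mathbf{T}^n}\phi(\log|f^{\ast}|)\,dm_n<\infty$; rewriting this as an iterated integral (Tonelli, the integrand being nonnegative), the inner integral
\[
\int_{\mathbf{T}^{n-k}}\phi\bigl(\log|f^{\ast}_{\zeta_{j_1}\dots\zeta_{j_k}}|\bigr)\,dm_{n-k}
\]
is finite for almost every $(\zeta_{j_1},\dots,\zeta_{j_k})\in\mathbf{T}^{k}$. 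Combined with the identification $(f_{\zeta_{j_1}\dots\zeta_{j_k}})^{\ast}=f^{\ast}_{\zeta_{j_1}\dots\zeta_{j_k}}$ from the previous paragraph, the characterization yields $f_{\zeta_{j_1}\dots\zeta_{j_k}}\in H_\phi(\mathbf{U}^{n-k})$ for almost every frozen boundary point. The genuinely hard input here is the Calder\'on--Zygmund--Davis identification of the iterated boundary function with the radial one (and its independence of the order of iteration); by contrast the harmonic-majorant estimate of part (1) and the Tonelli reduction of part (2) are routine once that identification is in hand.
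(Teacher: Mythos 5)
Your parts (1) and (2) are correct, and they follow exactly the route the paper itself has in mind: the paper gives no detailed argument for this proposition, saying only that it ``follows from the above discussion,'' i.e.\ from the $n$-harmonic majorant of $\phi(\log|f|)$ for interior slices and from the Zygmund--Davis iterated-limit theory for boundary slices. Your mean-value computation with the restricted majorant, and your upgrade from $N^{\ast}$-membership to $H_\phi$-membership via the boundary characterization plus Tonelli, are the natural completions of that sketch. One attribution quibble: Zygmund's theorem as quoted only keeps the slices in $N$; it is Davis's work that keeps them in $N^{\ast}$, and you genuinely need this, since the characterization $g\in H_\phi\Leftrightarrow\phi(\log|g^{\ast}|)\in L^1$ fails for $g\in N\setminus N^{\ast}$. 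Since you invoke Davis anyway, this is presentational.

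There is, however, a genuine gap in your treatment of the final (``Particularly'') clause. Writing $\zeta'=(\zeta_{j_1},\dots,\zeta_{j_k})$ and $z''$ for the remaining variables, you read $f^{\ast}_{\zeta'}$ as the restriction of the boundary function $f^{\ast}\colon\mathbf{T}^n\to\mathbf{C}$ to a boundary slice, and accordingly you prove the identity $(f_{\zeta'})^{\ast}(\zeta'')=f^{\ast}(\zeta',\zeta'')$ for a.e.\ $\zeta''\in\mathbf{T}^{n-k}$. In the paper, though, $f^{\ast}_{\zeta'}$ denotes a function on $\mathbf{U}^{n-k}$, namely the partial radial limit $z''\mapsto\lim_{r\to1^-}f(r\zeta_{j_1},\dots,r\zeta_{j_k},z'')$, and the clause asserts that this coincides with the iterated limit function $f_{\zeta'}$ at \emph{interior} points; this is spelled out, and used in precisely this sense, in the proof of Theorem~\ref{TH.INT.LOGSUB.GENERAL}, where the clause is what licenses treating $f^{\ast}_{\zeta'}$ as an $H^p(\mathbf{U}^{k})$ function and applying Lemma~\ref{LE.VUKOTIC} to it. Your boundary identity does not deliver this interior identification: for fixed $\zeta'$ the partial radial limit is only defined pointwise in $z''$ and is not a priori an analytic (or even continuous) function, so no uniqueness-from-boundary-values argument can transfer agreement on $\mathbf{T}^{n-k}$ to agreement on $\mathbf{U}^{n-k}$. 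The repair stays inside your toolkit and in fact uses your part (1): for each fixed $z''\in\mathbf{U}^{n-k}$ the interior slice $f_{z''}$ lies in $H_\phi(\mathbf{U}^{k})\subseteq N^{\ast}(\mathbf{U}^{k})$, so Davis's theorem applied to $f_{z''}$ gives, for a.e.\ $\zeta'\in\mathbf{T}^{k}$, that the iterated limit (iteration commutes with evaluation at $z''$) equals the radial limit $\lim_{r\to1^-}f(r\zeta',z'')$ --- which is the statement the paper actually uses, with the quantifier order ``for every $z''$ and almost every $\zeta'$.''
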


\begin{remark}
It  follows  that the function  $f_{\zeta_{j_1}\dots \zeta_{j_k}}$ which appears in the
above proposition does not depend  on the order of iteration; thus, we may assume $1\le
j_1<\dots<j_k \le n$.
\end{remark}

We introduce                    $PL_p(\mathbf{U}^n)\, (0<p<\infty)$ as the class of all
$n$-logarithmically subharmonic functions in $\mathbf{U}^n$ such that
\begin{equation*}
\|U\|_p\  = \, \sup_{0\le r<1} M_p(U,r)\,
= \sup_{0\le r<1}\left\{ \int _{\mathbf{U}^n}\, U(r\zeta)^p\, dm_n(\zeta)\right\}^{1/p}
\end{equation*}
It is       known that every $U\in PL_p(\mathbf{U}^n)$ has the radial limit a.e., i.e.,
there exists
\begin{equation*}
U(\zeta) = \lim_{r\rightarrow 1} U(r\zeta)\quad\text{for a.e.}\ \zeta\in\mathbf{T}^n.
\end{equation*}
Obviously   $\left\{|f|:  f\in H^p(\mathbf{U}^n)\right\} \subseteq PL_p(\mathbf{U}^n)$.

This subsection  is devoted to the proof of the following

\begin{theorem}\label{TH.INT.LOGSUB.GENERAL}
For   $f(z)=f(z_1,\dots,z_n)\in H^p(\mathbf U^n)$ and integers  $1\le j_1<\dots<j_k\le
n$, where $1\le k<n$ is also an integer, the function
\begin{equation*}
U(z_{j_1},\dots,z_{j_k})=\|f_{z_{j_1}\dots z_{j_k} }\|_p^p
\end{equation*}
is well defined in $\mathbf{U}^{k}$.        Moreover, $U\in  PL_1(\mathbf{U}^k)$   and
the  $PL_1$-norm of $U$ is given by
\begin{equation*}
\|U\|_1=\|f\|_p^p.
\end{equation*}
\end{theorem}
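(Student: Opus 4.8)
The plan is to reduce everything to the one–parameter approximation $U_\rho$ obtained by integrating $|f|^p$ over a torus of radius $\rho$ in the suppressed variables, and then to let $\rho\to1^-$.

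First I would normalize the situation. By the remark following Proposition~\ref{TH.CALDERON.ZYGMUND} the iterated boundary function does not depend on the order of iteration, so it is harmless to assume $\{j_1,\dots,j_k\}=\{1,\dots,k\}$; write $w=(z_1,\dots,z_k)\in\mathbf{U}^k$ for the retained variables and $\zeta=(z_{k+1},\dots,z_n)$ for the others. Part~(1) of Proposition~\ref{TH.CALDERON.ZYGMUND} gives $f_w\in H^p(\mathbf{U}^{n-k})$ for every $w\in\mathbf{U}^k$, so
\begin{equation*}
U(w)=\|f_w\|_p^p=\lim_{\rho\to1^-}\int_{\mathbf{T}^{n-k}}|f(w,\rho\zeta)|^p\,dm_{n-k}(\zeta)
\end{equation*}
is well defined. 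For $0\le\rho<1$ set $U_\rho(w)=\int_{\mathbf{T}^{n-k}}|f(w,\rho\zeta)|^p\,dm_{n-k}(\zeta)$; since $|f|^p$ is $n$-subharmonic, $U_\rho(w)$ increases to $U(w)$ as $\rho\to1^-$, and each $U_\rho$ is continuous on $\mathbf{U}^k$.

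Next I would establish that $U$ is $k$-logarithmically subharmonic. For fixed $\rho$ the integrand $w\mapsto|f(w,\rho\zeta)|^p$ is logarithmically subharmonic in each of $z_1,\dots,z_k$ separately and is jointly continuous, and $m_{n-k}$ is a finite measure on $\mathbf{T}^{n-k}$; applying Lemma~\ref{LE.RONKIN.2} one variable at a time shows that $U_\rho$ is $k$-logarithmically subharmonic. Since $U=\sup_\rho U_\rho$, Lemma~\ref{LE.RONKIN.1} finishes this step \emph{provided} $U$ is upper semicontinuous. This is the main obstacle: as an increasing supremum of continuous functions $U$ is automatically lower semicontinuous, but the reverse inequality is not formal. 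I would obtain it by proving that $U$ is in fact continuous. The twisted sub–mean–value inequalities for $e^{\alpha x_j+\beta y_j}U$ pass to the limit by monotone convergence (applied to the corresponding inequalities for $U_\rho$), while a continuous upper bound is supplied by the domination $U(w)\le h(w,0)$, where $h$ is the least $n$-harmonic majorant of $|f|^p$; here $h(w,0)$ is the $k$-harmonic Poisson integral of the boundary function $\omega\mapsto\int_{\mathbf{T}^{n-k}}|f^*(\omega,\zeta)|^p\,dm_{n-k}(\zeta)$ and is therefore continuous. Controlling the limit superior of $U$ by this continuous majorant, together with the sub–mean–value property, yields upper semicontinuity; this is the delicate point, and it is exactly where the genuine $H^p$ structure, rather than the abstract supremum alone, must be used.

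Finally the norm identity is the clean part, handled by Fubini together with monotonicity. Put
\begin{equation*}
I(r,\rho)=\int_{\mathbf{T}^k}\int_{\mathbf{T}^{n-k}}|f(r\omega,\rho\zeta)|^p\,dm_{n-k}(\zeta)\,dm_k(\omega),
\end{equation*}
so that $\int_{\mathbf{T}^k}U_\rho(r\omega)\,dm_k(\omega)=I(r,\rho)$. Monotone convergence in $\rho$ gives $M_1(U,r)=\int_{\mathbf{T}^k}U(r\omega)\,dm_k(\omega)=\sup_\rho I(r,\rho)$. Because $I$ is nondecreasing in each argument (again by $n$-subharmonicity of $|f|^p$) and $I(s,s)=M_p^p(f,s)$, one gets
\begin{equation*}
\|U\|_1=\sup_{r}M_1(U,r)=\sup_{r,\rho}I(r,\rho)=\sup_{s}I(s,s)=\sup_{s}M_p^p(f,s)=\|f\|_p^p .
\end{equation*}
In particular $\|U\|_1<\infty$, so once the logarithmic subharmonicity is secured one concludes $U\in PL_1(\mathbf{U}^k)$ with $\|U\|_1=\|f\|_p^p$.
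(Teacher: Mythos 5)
Your overall scaffolding matches the paper's proof: the reduction to the sliced means $U_\rho$, the use of Lemma~\ref{LE.RONKIN.2} to make each $U_\rho$ $k$-logarithmically subharmonic, the representation $U=\sup_\rho U_\rho$, and the appeal to Lemma~\ref{LE.RONKIN.1}. Your norm computation via $I(r,\rho)$, monotonicity in each argument, and passage to the diagonal is correct, and in fact cleaner than the paper's, which proves $\|U\|_1\le\|f\|_p^p$ by Fubini and gets the reverse inequality from Fatou's lemma applied to the radial boundary values of $U$. However, the step you yourself flag as delicate --- upper semicontinuity of $U$ --- is a genuine gap, and the mechanism you propose cannot close it. Sub-mean-value inequalities (which indeed pass to the limit by monotone convergence) together with a continuous majorant do \emph{not} imply upper semicontinuity. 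Concretely, take $u_n(z)=|z|^{1/n}$ on $\mathbf{U}$: each $u_n$ is continuous and logarithmically subharmonic, the sequence increases, and the limit $u$ equals $1$ for $z\ne0$ and $0$ at $z=0$. This $u$ satisfies the sub-mean-value property, is lower semicontinuous, and is dominated by the continuous harmonic function $1$, yet it fails to be upper semicontinuous at the origin. The same failure mode is exactly what you must rule out, and a majorant cannot do it: bounding $\limsup_{w\to w_0}U(w)$ by $h(w_0,0)$ is useless because the majorant is in general strictly larger than $U(w_0)$.

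What actually closes the argument in the paper is a different representation of $U$: not as a supremum of continuous functions, but as a single integral of boundary slices. By the Davis/Calder\'on--Zygmund part of Proposition~\ref{TH.CALDERON.ZYGMUND}, for almost every $\zeta'\in\mathbf{T}^{n-k}$ the boundary slice $f^*_{\zeta'}$ lies in $H^p(\mathbf{U}^k)$, and the iterated boundary values can be interchanged, so that
\begin{equation*}
U(w)=\int_{\mathbf{T}^{n-k}}|f^*_{\zeta'}(w)|^p\,dm_{n-k}(\zeta'),\quad w\in\mathbf{U}^k,
\end{equation*}
an integral over a finite measure space of functions continuous in $w$. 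Lemma~\ref{LE.VUKOTIC} then supplies, on each compact $K\subseteq\mathbf{U}^k$, the dominant $|f^*_{\zeta'}(w)|^p\le C(K)\,\|f^*_{\zeta'}\|_p^p$, whose integral in $\zeta'$ equals $C(K)\,\|f\|_p^p<\infty$ by Fubini; Lemma~\ref{LE.CONTINUOUS} (dominated convergence) then yields continuity of $U$, after which Lemma~\ref{LE.RONKIN.1} applies to $U=\sup_\rho U_\rho$. This boundary-integral representation plus the Vukoti\'c growth estimate is precisely the ``genuine $H^p$ structure'' your sketch invokes but never supplies; without it, or some substitute of comparable strength, your proof does not go through.
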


For a proof of Theorem~\ref{TH.INT.LOGSUB.GENERAL} we need  the auxiliary results that
are  formulated  below.

\begin{lemma}\label{LE.VUKOTIC}        For $z\in\mathbf{U}^n$ there holds the (optimal)
estimate
of the modulus
\begin{equation*}
|F(z)|^p\le \frac 1 {(1-|z_1|^2) \cdots(1-|z_n|^2) }\|F\|_p,
\end{equation*}
of     $F(w)=F(w_1,\dots,w_n)\in H^p(\mathbf{U}^n)\, (0<p<\infty)$. The equality  sign
occurs if and only if
\begin{equation*}
F(w) = \lambda K_z(w)^{2/p},
\end{equation*}
where                                                 $\lambda$ is any constant. Here,
$K(w,z)=\prod_{j=1}^n ({1-w_j \overline{z}_j})^{-1}$  stands for the Cauchy--Szeg\"{o}
kernel for the unit polydisc.
\end{lemma}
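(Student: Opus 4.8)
The plan is to establish the dimensionally correct bound $|F(z)|^p\le \|F\|_p^p/\prod_{j=1}^n(1-|z_j|^2)$ (the statement's right-hand side should read $\|F\|_p^p$) first for $n=1$ and then to climb to the polydisc by induction on $n$. The induction is essential precisely because, as the preceding remark warns, there is no polydisc Riesz factorization, so one cannot collapse the $n$-dimensional $H^p$ problem to a single $H^2$ reproducing-kernel estimate in one move. For the base case $n=1$ I would use the Hilbert structure of $H^2$: given $F\in H^p(\mathbf{U})$, $F\not\equiv0$, factor $F=Bh$ with $B$ the Blaschke product and $h$ zero-free, so $|F|\le|h|$ and $\|h\|_p=\|F\|_p$; since the disc is simply connected, $\phi:=h^{p/2}$ is a well-defined element of $H^2(\mathbf{U})$ with $\|\phi\|_2^2=\|F\|_p^p$. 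The reproducing identity $\phi(z)=\langle\phi,K_z\rangle$ gives $|\phi(z)|^2\le\|\phi\|_2^2\,K(z,z)=\|F\|_p^p/(1-|z|^2)$, and combining with $|F(z)|\le|h(z)|=|\phi(z)|^{2/p}$ yields the claim. Equality forces $\phi=\lambda K_z$ by the Cauchy--Schwarz equality (hence $h=\lambda' K_z^{2/p}$), and $|F(z)|=|h(z)|$ at an interior point forces $|B(z)|=1$, so $B$ is a unimodular constant by the maximum principle; thus $F=\lambda K_z^{2/p}$.

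For the inductive step I would freeze the last variable. By Proposition~\ref{TH.CALDERON.ZYGMUND} the slice $w_n\mapsto F(z',w_n)$ lies in $H^p(\mathbf{U})$, and for a.e. $\zeta_n\in\mathbf{T}$ the slice $w'\mapsto F(w',\zeta_n)$ lies in $H^p(\mathbf{U}^{n-1})$. Applying the $n=1$ estimate in $w_n$, then the inductive hypothesis in $w'$, and integrating the latter over $\zeta_n$, I obtain
\[
|F(z)|^p\le \frac{1}{1-|z_n|^2}\int_{\mathbf{T}}|F(z',\zeta_n)|^p\,dm_1(\zeta_n)\le \frac{1}{\prod_{j=1}^n(1-|z_j|^2)}\int_{\mathbf{T}^n}|F^{\ast}|^p\,dm_n,
\]
and the last integral is $\|F\|_p^p$, which is the desired inequality. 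The \emph{if} direction of the equality statement is then the routine computation $\|K_z^{2/p}\|_p^p=K(z,z)$ together with $|K_z(z)|^p=K(z,z)^2$, which shows $\lambda K_z^{2/p}$ saturates the bound.

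The substance of the proof is the \emph{only if} direction. Equality at $z$ forces both the integrated $(n-1)$-dimensional estimate (hence equality for a.e. $\zeta_n$) and the $n=1$ estimate in $w_n$ to be equalities. By the inductive equality characterization, for a.e. $\zeta_n$ the slice satisfies $F(w',\zeta_n)=F(0',\zeta_n)\prod_{j<n}(1-w_j\overline{z}_j)^{-2/p}$. Passing to radial boundary values and invoking Proposition~\ref{TH.CALDERON.ZYGMUND} to identify iterated and radial limits, I would conclude that $F$ and $G(w):=F(0',w_n)\prod_{j<n}(1-w_j\overline{z}_j)^{-2/p}$, both in $H^p(\mathbf{U}^n)$, share the same boundary function, whence $F\equiv G$ by uniqueness of $H^p$ functions from their boundary data. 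Substituting this factorization into the $n=1$ equality in $w_n$ pins down $F(0',w_n)=c\,(1-w_n\overline{z}_n)^{-2/p}$, giving $F=\lambda K_z^{2/p}$ and closing the induction.

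I expect the equality analysis to be the main obstacle. The delicate points are that equality in the \emph{integrated} inequality only delivers equality a.e. in the frozen boundary variable $\zeta_n$, and that one must upgrade these a.e.-in-$\zeta_n$ slice identities to an honest global factorization $F\equiv G$ on all of $\mathbf{U}^n$. This is exactly where the lack of a polydisc Riesz factorization bites, and it is why the argument must route through boundary-value uniqueness and the Calder\'on--Zygmund slice theorem (Proposition~\ref{TH.CALDERON.ZYGMUND}) rather than through a direct $H^2$ reduction.
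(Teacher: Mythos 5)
Your proof is correct, and it takes a genuinely different route from the paper's treatment: the paper does not actually prove Lemma~\ref{LE.VUKOTIC}, but defers to Vukoti\'c~\cite{VUKOTIC.PAMS}, whose idea is a direct, non-inductive argument --- compose $F$ with the M\"obius automorphism of $\mathbf{U}^n$ moving $z$ to the origin and multiply by the kernel factor that makes this an $H^p$-isometry; at the origin the estimate is just the sub-mean-value inequality $|G(0)|^p\le\|G\|_p^p$ for the $n$-subharmonic function $|G|^p$, and equality forces $G$ to be constant, which transforms back to $\lambda K_z^{2/p}$. That route treats all $n$ at once and needs no factorization or boundary-value theory, so your opening claim that induction is \emph{essential} is not accurate; what is true is that your route is the one that stays inside the toolkit this paper builds (one-variable Riesz factorization plus the $H^2$ reproducing kernel for $n=1$, then slicing and Proposition~\ref{TH.CALDERON.ZYGMUND}), and it mirrors how the paper itself deploys the lemma inside Theorem~\ref{TH.INT.LOGSUB.GENERAL}. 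Your equality analysis --- a.e.-in-$\zeta_n$ equality of the slice estimates, upgraded to the global identity $F\equiv G$ via agreement of radial boundary functions and uniqueness of an $H^p$ function from its boundary data --- is sound; the uniqueness step is valid for every $0<p<\infty$ because a nonzero function in $N(\mathbf{U}^n)$ satisfies $\log|f^\ast|\in L^1(\mathbf{T}^n,m_n)$, a fact quoted in the paper, so a function whose boundary values vanish a.e.\ is identically zero. Two small slips, neither a gap: you rightly observe that the right-hand side of the stated inequality should read $\|F\|_p^p$ rather than $\|F\|_p$; and in the ``if'' direction the identity you want is $|K_z(z)^{2/p}|^p=K(z,z)^2$ (equivalently $|K_z(z)|^2=K(z,z)^2$), not $|K_z(z)|^p=K(z,z)^2$ as written.
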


\begin{remark}
A similar     statement to Lemma~\ref{LE.VUKOTIC}      may be found in the Vukoti{\'c}
work~\cite{VUKOTIC.PAMS} for functions which belong to weighted Bergman spaces in the
unit ball of  $\mathbf{C}^n$. The same idea may  be applied  to derive the  preceding
optimal  result.
\end{remark}

In what follows we will use several times the next lemma  which follows   immediately
from the Lebesgue Dominant Convergence Theorem. If $(X,\nu)$ is a measurable    space
and $\Lambda$  a non--empty set of indexes,  we say that $G$  is    dominant  for the
family $\{F_\alpha:\alpha\in\Lambda\}$ of  real--valued measurable functions in   $X$
if $F_\alpha(z)\le G(z)$ for almost     every $z\in X$ and for all $\alpha\in\Lambda$.

\begin{lemma}\label{LE.CONTINUOUS}
Let $D$ be a domain,   $(X,\nu)$  a measurable space, and let  $F(z,w)$ be defined in
$D\times X$ such that   $F_z   \in L^1(X,\nu)$ for all $z\in D$ and $F_w\in C(D)$ for
almost every  $w\in   X$.   If  for any compact set $K\subseteq D$ there exists $G\in
L^1(X,\nu)$  dominant for the family
\begin{equation*}
\{F_z:z\in K\},
\end{equation*}
then
\begin{equation*}
z\rightarrow \int_X F(z,w)\, d\nu(w)
\end{equation*}
is continuous in $D$.
\end{lemma}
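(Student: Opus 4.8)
The plan is to realize the map $z\mapsto I(z):=\int_X F(z,w)\,d\nu(w)$ as a limit of integrals and to apply the Lebesgue Dominated Convergence Theorem. First note that $I(z)$ is well defined for every $z\in D$, since $F_z\in L^1(X,\nu)$ by hypothesis. Because $D$ is open and continuity is a local property, it suffices to fix an arbitrary point $z_0\in D$ and show that $I$ is continuous there. I would choose a radius $\delta>0$ small enough that the closed ball $K=\overline{B}(z_0,\delta)$ is contained in $D$; such a $K$ exists because $D$ is open, and it is compact.

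Next I would invoke the sequential criterion for continuity. Let $(z_k)$ be any sequence in $D$ with $z_k\to z_0$. Since $z_k\to z_0$ and $K$ is a neighborhood of $z_0$, all but finitely many $z_k$ lie in $K$, so I may assume $z_k\in K$ for all $k$. Applying the domination hypothesis to the compact set $K$ produces a single $G\in L^1(X,\nu)$ dominant for $\{F_z:z\in K\}$, so that $F_{z_k}(w)\le G(w)$ for a.e.\ $w\in X$ and every $k$. Separately, for a.e.\ $w\in X$ the section $F_w$ is continuous on $D$, and hence $F_{z_k}(w)=F_w(z_k)\to F_w(z_0)=F_{z_0}(w)$ for a.e.\ $w$. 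Thus I have pointwise a.e.\ convergence of the integrands together with a fixed integrable majorant, which are precisely the ingredients of the Dominated Convergence Theorem; it yields $I(z_k)=\int_X F_{z_k}\,d\nu\to\int_X F_{z_0}\,d\nu=I(z_0)$. As the sequence $(z_k)\to z_0$ was arbitrary, $I$ is continuous at $z_0$, and since $z_0\in D$ was arbitrary, $I$ is continuous on $D$.

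The one step that genuinely uses the hypotheses in full is the production of a \emph{single} integrable majorant valid simultaneously for all the $z_k$ near $z_0$: this is exactly what the domination-on-compacta assumption supplies, and it is the reason one passes to a compact neighborhood $K$ of $z_0$ (available since $D$ is open) rather than attempting to dominate over all of $D$ at once. The requisite measurability is automatic, each $F_z$ being measurable as an element of $L^1(X,\nu)$ and the a.e.\ limit $F_{z_0}$ being measurable as well.

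One point deserves care. The bound furnished by the notion of dominant is one-sided, $F_{z_k}(w)\le G(w)$, whereas the Dominated Convergence Theorem requires control of $|F_{z_k}(w)|$; a one-sided bound alone would give only upper semicontinuity via Fatou's lemma. In every application of this lemma in the present paper the integrand $F$ is nonnegative, so that $0\le F_{z_k}(w)\le G(w)$ gives the two-sided estimate $|F_{z_k}|\le G$ outright; equivalently one reads the domination as a bound on $|F_z|$. With that reading the Dominated Convergence argument above applies verbatim and delivers the full continuity of $I$.
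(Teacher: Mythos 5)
Your proof is correct and is essentially the paper's own argument: the paper states this lemma without proof, as something that ``follows immediately from the Lebesgue Dominated Convergence Theorem,'' and what you write out (localize to a compact ball around an arbitrary $z_0$, extract the single integrable majorant from the domination-on-compacta hypothesis, and pass to the limit along sequences using the a.e.\ continuity of the sections $F_w$) is precisely that intended argument made explicit. Your final observation is in fact a point of rigor the paper glosses over: since the paper defines ``dominant'' by the one-sided bound $F_\alpha \le G$, the lemma as literally stated would only yield upper semicontinuity via Fatou, and your fix --- reading the hypothesis as a bound on $|F_z|$, which is automatic in every application here because the integrands are nonnegative --- is exactly what is needed.
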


\begin{proof}[Proof of Theorem~\ref{TH.INT.LOGSUB.GENERAL}]
For                      simplicity we will assume that  ${j_1}= n-k+1$ and $j_k = n$.

In  view of the first part of Proposition~\ref{TH.CALDERON.ZYGMUND},     the value of
\begin{equation*}\begin{split}
U(z_{n-k+1},\dots,z_{n})&
=\|f_{z_{n-k+1}\dots z_{n} }\|_p^p
\\& = \int_{\mathbf{T}^{n-k}} |f^*_{z_{n-k+1}\dots z_{n}}(\zeta_1,\dots,\zeta_{n-k}) |^p dm_{n-k}(\zeta_1,\dots,\zeta_{n-k})
\\& = \int_{\mathbf{T}^{n-k}} |f^*_{\zeta_1\dots\zeta_{n-k}}(z_{n-k+1},\dots,z_{n}) |^p dm_{n-k}(\zeta_1,\dots,\zeta_{n-k})
\end{split}
\end{equation*}
is finite for every  $(z_{n-k+1},\dots,z_{n})\in  \mathbf{U}^k$.     We have used the
identity
\begin{equation*}
f\sp\ast_{z_{n-k+1}\dots z_{n}}(\zeta_1,\dots,\zeta_{n-k})
 =f\sp\ast_{\zeta_1\dots\zeta_{n-k}}(z_{n-k+1},\dots,z_{n}),
\end{equation*}
which holds  for  every  $(z_{n-k+1},\dots,z_{n})\in\mathbf{U}^k$     and for  almost
every $(\zeta_1,\dots,\zeta_{n-k})\in\mathbf{T}^{n-k}$.   Let us say that on the left
side of the preceding           relation         we mean the radial boundary value of
$f_{z_{n-k+1}\dots  z_{n}}$ at $(\zeta_1,\dots,\zeta_{n-k})\in\mathbf{T}^{n-k}$.   On
the               right side we have the value of the        radial boundary function
$f\sp\ast_{\zeta_1\dots\zeta_{n-k}}$   at the point       $(z_{n-k+1},\dots,z_{n})\in
\mathbf{U}^k$.                           In both cases it is about the boundary value
\begin{equation*}
\lim_{r\rightarrow 1^-}f(r\zeta_1,\dots, r\zeta_{n-k},z_{n-k+1},\dots,z_{n}).
\end{equation*}

Denote $z'=(z_1,\dots,z_{n-k})$ and  $z''=(z_{n-k+1},\dots,z_{n})$. Similarly, denote
$\zeta' =(\zeta_1,\dots,\zeta_{n-k})$ and $\zeta'' =(\zeta_{n-k+1},\dots, \zeta_{n})$.

For all $0\le r <1$ introduce
\begin{equation*}
\tilde{U}_r(z'')\, =
\int_{\mathbf{T}^k}  |f_{ r \zeta'}(z'')|^p dm_k(\zeta'),   \quad z''\in \mathbf{U}^k.
\end{equation*}
Note                that  $\tilde{U}_r (z'')$ is not  the  $r$-dilatation of $U(z'')$.

Regarding         Lemma~\ref{LE.RONKIN.2}, the function  $\tilde{U}_r,\, 0\le r<1$ is
$k$-logarithmically subharmonic in the polydisc  $\mathbf{U}^k$, since the    same is
true for
\begin{equation*}
\mathbf{U}^k\ni z'' \rightarrow |f(r \zeta',z'')|^p,
\end{equation*}
for all  $\zeta' = (\zeta_1,\dots,\zeta_{n-k})\in \mathbf{T}^{n-k}$.  The convergence
\begin{equation*}
\tilde{U}_r(z'')\rightarrow U(z''),\quad r\rightarrow 1^-
\end{equation*}
is nondecreasing in $r$,     if $z''=(z_{n-k+1},\dots,z_{n})\in\mathbf{U}^k$ is fixed,
since
\begin{equation*}
\tilde{U}_r(z'') = M_p^p(f_{z''},r)
\end{equation*}
and  $f_{z''}\in H^p(\mathbf{U}^{n-k})$. Therefore,
\begin{equation*}
U(z'')\, = \sup_{0\le r<1} \tilde {U}_r(z'')
\end{equation*}
for all  $z''=(z_{n-k+1}, \dots,z_{n})\in\mathbf{U}^k$.

In view of Lemma~\ref{LE.RONKIN.1},         in order  to show that  $U$ is continuous
$k$-logarithmically subharmonic in      $\mathbf{U}^k$, i.e., that it  belongs to the
class  $PL(\mathbf{U}^k)$, it is enough to prove that  $U$  is continuous in the same
domain. To realize that we will use Lemma~\ref{LE.CONTINUOUS}. Let $K$ be any compact
subset of $\mathbf{U}^k$.  We  show that there exists an integrable function which is
dominant  for the  family
\begin{equation*}
\left\{|f\sp\ast_{z''}(\zeta')|^p :z'' = (z_{n-k+1},\dots,z_n)\in K,\,
 \zeta'=(\zeta_1,\dots,\zeta_{n-k})\in\mathbf{T}^{n-k} \right\}.
\end{equation*}
There exists a constant   $C=C (K)$ such that
\begin{equation}\label{INEQ.CK.ESTIMATE}
 \frac {1}{(1-|z_{n-k+1}|^2)\cdots (1-|z_n|^2)} \le C
\end{equation}
for $z''=(z_{n-k+1},\dots,z_n)\in K$.   Since $f\sp\ast_{\zeta_1\dots\zeta_{n-k}}\in
H^p(\mathbf{U}^{k})$      for a.e. $(\zeta_1,\dots, \zeta_{n-k})\in\mathbf{T}^{n-k}$
(see the second part of Proposition~\ref{TH.CALDERON.ZYGMUND}),     according to the
Lemma~\ref{LE.VUKOTIC},  we find
\begin{equation}\label{INEQ.ESTIMATE}
|f\sp\ast_{\zeta_1\dots \zeta_{n-k}}(z_{n-k+1},\dots,z_n)|^p\le\frac
{\|f\sp\ast_{\zeta_1\dots \zeta_{n-k}}\|^p_p }{(1-|z_{n-k+1}|^2)\cdots (1-|z_n|^2)},
\end{equation}
for $(z_{n-k+1},\dots,z_n)\in \mathbf{U}^k$.                              The growth
estimate~\eqref{INEQ.ESTIMATE}                gives the dominant function. Denote by
\begin{equation*}\begin{split}
{G} (\zeta_1,\dots, \zeta_{n-k}) &= C\|f\sp\ast_{\zeta_1\dots \zeta_{n-k}}\|^p_p
\\&=C\int_{\mathbf{T}^k}|f\sp\ast_{\zeta_1\dots \zeta_{n-k}}(\zeta_{n-k+1},\dots,\zeta_{n})|^p dm_{k}(\zeta_{n-k+1},\dots,\zeta_n),
\end{split}\end{equation*}
a function defined almost everywhere on the torus  $\mathbf{T}^{n-k}$.  The function
${G}$ is integrable, since applying the                          Fubini  theorem and
Proposition~\ref{TH.CALDERON.ZYGMUND}, we find
\[\begin{split}
\int_{\mathbf{T}^{n-k}} {G} (\zeta_1,\dots, \zeta_{n-k})=
C\int_{\mathbf{T}^n}|f\sp\ast(\zeta_1,\dots,\zeta_n)|^p  dm_n(\zeta_1,\dots,\zeta_n)
= C\|f\|^p_p<\infty.
\end{split}\]
Using~\eqref{INEQ.ESTIMATE}                 and~\eqref{INEQ.CK.ESTIMATE}, it follows
\begin{equation*}
|f\sp\ast_{z_{n-k+1}\dots z_n}(\zeta_1,\dots,\zeta_{n-k})|^p
\le G({\zeta_1,\dots,\zeta_{n-k}})
\end{equation*}
for         almost every  $({\zeta_1,\dots,\zeta_{n-k}})\in\mathbf{T}^{n-k}$ and for
every $(z_{n-k+1},\dots,z_n)\in K$. Since the  function $|f\sp\ast_{\zeta'}(z'')|^p$
is continuous (for  a.e. $\zeta' =({\zeta_1,\dots,\zeta_{n-k}})\in \mathbf{T}^{n-k}$,
as modulus of an analytic function,                                what follows from
Proposition~\ref{TH.CALDERON.ZYGMUND}), applying Lemma~\ref{LE.CONTINUOUS} we obtain
that
\begin{equation*}\begin{split}
U(z_{n-k+1},\dots,z_{n}) \,
= \int_{\mathbf{T}^{n-k}} |f\sp\ast_{z_{n-k+1}\dots z_{n}}(\zeta_1,\dots,\zeta_{n-k}) |^p d m_{n-k}(\zeta_1,\dots,\zeta_{n-k})
\end{split}
\end{equation*}
is also continuous in $\mathbf{U}^{k}$.

In order to finish the proof of this theorem              we have to show that $U\in
{PL}_1(\mathbf{U}^k)$ and $\|U\|_1=\|f\|_p^p$.  First of all, for $0\le r<1$ we have
\[\begin{split}
M_1(U,r) &\, = \int_{\mathbf{T}^k} \|f_{r\zeta''}\|_p^p\, dm_k(\zeta'')
\\&= \int_{\mathbf{T}^{k}} \left\{\int_{\mathbf{T}^{n-k}} |f\sp\ast_{r\zeta''}(\zeta')|^pd m_{n-k}(\zeta')\right\}dm_k(\zeta'')
\\&= \int_{\mathbf{T}^{n-k}} \left\{\int_{\mathbf{T}^{k}} |f\sp\ast_{r\zeta''}(\zeta')|^p dm_k(\zeta'')\right\}d m_{n-k}(\zeta')
\\&= \int_{\mathbf{T}^{n-k}} M_p^p (f\sp\ast_{\zeta'},r)\,  d m_{n-k}(\zeta')
\\& \le \int_{\mathbf{T}^{n-k}}\|f\sp\ast_{\zeta'}\|_p^p \, d m_{n-k}(\zeta')
\\&= \|f\|^p_p<\infty.
\end{split}\]
It follows
\begin{equation*}
\|U\|_1 \, = \sup_{0\le  r<1} M_1(U,r)\le\|f\|_p^p.
\end{equation*}
Therefore,   we indeed have     $U\in {PL}_1(\mathbf{U}^k)$.

To   show the reverse inequality, it is enough to apply the Fatou lemma. Since $U\in
PL_1$, the radial boundary value $U(\zeta'')$ exists for almost every point $\zeta''
=(\zeta_{n-k+1},\dots,\zeta_{n})\in\mathbf{T}^{k}$,                     and it holds
\begin{equation*}\begin{split}
U(\zeta_{n-k+1},\dots,\zeta_{n})\,  & =
\lim_{r\rightarrow 1^- }U(r\zeta_{n-k+1},\dots,r\zeta_{n})
\\&=\lim_{r\rightarrow 1^-}\|f_{(r\zeta_{n-k+1})\dots (r\zeta_{n}) }\|_p^p
\ge\|f\sp\ast_{\zeta_{n-k+1}\dots \zeta_{n} }\|_p^p.
\end{split}\end{equation*}
Now,  we obtain
\begin{equation*}\begin{split}
\|U\|_1& \, = \int_{\mathbf{T}^k} U(\zeta'')\, dm_k(\zeta'')
 \ge \int_{\mathbf{T}^k}\|f\sp\ast_{\zeta''}\|_p^p\, dm_k(\zeta'')=\|f\|_p^p.
\end{split}\end{equation*}
what finishes this proof.
\end{proof}

\section{The proof of the main  theorem}
This section  is devoted to                            the proof of the main theorem.

\begin{proof}[Proof of Theorem~\ref{TH.MAIN}]
We use the induction on the dimension parameter $n$.       For $n=1$ we have already
proved the theorem (Theorem~\ref{TH.MAIN.1}).

Assume  now that this theorem is valid for $n-1$, where $n\ge 2$. We are    going to
prove it for $n$. Let $f_j(z)=f_j(z',z_n)\in H^{p_j}(\mathbf{U}^n)$          for all
$j=1,\dots,m$, where    we have denoted $z'=(z_1,\dots,z_{n-1})$.  Since $d\nu_n(z)=
d\nu_{n-1}(z')\times d\mu(z_n)$,               applying the Fubini theorem we obtain
\begin{equation}\label{BOTH.INEQ}\begin{split}
\int_{\mathbf {U}^n} \Phi(\dots,|f_j(z)|^{p_j},\dots) & \,d\nu_n(z)
\\&=\int_{\mathbf{U}} \left\{\int_{\mathbf{U}^{n-1}} \Phi(\dots,|f_j^{z_n}(z')|^{p_j},\dots)\, d{\nu}_{n-1}(z')\right\} d\mu(z_n)
\\&\le\int_{\mathbf{U}} \Phi(\dots,\|f_j^{z_n}\|_{p_j}^{p_j},\dots) \, d\mu(z_n)
\le \Phi(\dots,\|f_j\|_{p_j}^{p_j},\dots),
\end{split}\end{equation}
what proves the inequality in our theorem.

More precisely, since $f^{z_n}_j\in H^{p_j}(\mathbf{U}^{n-1})$  for every fixed $z_n
\in\mathbf{U}$, applying   the induction hypothesis, we obtain the  first inequality
above
\begin{equation}\label{FIRST.INEQ}
\int_{\mathbf {U}^{n-1}}  \Phi(\dots,|f_j^{z_n}(z')|^{p_j},\dots)\, d{\nu}_{n-1}(z')
\le\Phi(\dots,\|f_j^{z_n}\|_{p_j}^{p_j},\dots).
\end{equation}
In view of Theorem~\ref{TH.INT.LOGSUB.GENERAL}, let $U_j\in {PL}_1\,  (j=1,\dots,m)$
be defined in the unit disc  in the following way
\begin{equation}\label{GJ.NORM}
U_j(z_n) = \|f_j^{z_n}\|^{p_j}_{p_j},\quad z_n\in\mathbf{U}.
\end{equation}
By the same theorem we have
\begin{equation}
\|U_j\|_1=\|f_j\|_{p_j}^{p_j}
\end{equation}
for all $j=1,\dots,m$. The second inequality
\begin{equation}\begin{split}\label{SECOND.INEQ}
\int_{\mathbf{U}} \Phi(\dots,\| f_j^{z_n} \|_{p_j}^{p_j},\dots) \, d\mu(z_n)
&=\int_{\mathbf{U}} \Phi(\dots,U_j(z_n),\dots) \, d\mu(z_n)
\\&\le \Phi(\dots,\|U_j\|_1,\dots)  =\Phi(\dots,\|f_j\|_{p_j}^{p_j},\dots)
\end{split}\end{equation}
follows  from    the  logarithmically subharmonic version of the case $n=1$ of   our
main  theorem   (Theorem~\ref{TH.LOGSUBHARMONIC}).

Regarding Theorem~\ref{TH.LOGSUBHARMONIC},  it is not hard to see that   the    main
inequality holds if we take  $U_j\in PL_1(\mathbf{U}^n)$    instead of $f_j\in H^{1}
(\mathbf{U}^n)$ (for all $j=1,\dots,m$) with $PL_1(\mathbf{U}^n)$-norms of functions
$U_j\, (j=1,\dots,m)$ on the right side.   This is important to note for the rest of
this proof which  is devoted to the extremal  functions. To establish   this version
of  the main inequality, it is enough to prove it for dilatations $(U_j)_r,\, 0\le r
<1$. One can prove this as we have just done  for analytic   functions  (even easily,
since we  have continuous  functions  in  a vicinity  of the unit polydisc). Letting
$r\rightarrow 1^-$ we obtain the desired inequality,                       since the
$PL_1(\mathbf{U}^n)$-norms of dilations                   of a function in the class
$PL_1(\mathbf{U}^n)$ converge to the $PL_1(\mathbf{U}^n)$-norm of the  same function.
This approach,  however, does not provide  all extremal       functions for the main
inequality in the  subharmonic case neither in the analytic case.

As we have said,    the     second half of this proof is    devoted  to the extremal
functions  for the main inequality. However, we will make a digression   in order to
show that the  function
\begin{equation*}
F(z_n)
= \int_{\mathbf{U}^{n-1}}   \Phi(\dots,|f_j^{z_n}(z')|^{p_j},\dots)\, d\nu_{n-1}(z'),
\end{equation*}
which   appears at the beginning of this proof,    under the integral  sign   of the
relation~\eqref{FIRST.INEQ} is continuous  at every point  $z_n\in\mathbf{U}$.    In
view of Lemma~\ref{LE.CONTINUOUS}, it is enough to show  that for any compact set $K
\subseteq\mathbf{U}$                                          there exists $G(z')\in
L^1(\mathbf{U}^{n-1},\nu_{n-1})$ dominant                             for the family
\begin{equation*}
\left\{\tilde{F}_{z_n}(z')
=  \Phi(\dots,|f_j^{z_n}(z')|^{p_j},\dots): z_n\in K\right\}.
\end{equation*}
To        finish  this, let  a  constant $C = C(K)$  be chosen in such  a   way that
\begin{equation*}
\frac1{1-|z_n|^2}\le C,\quad z_n\in K.
\end{equation*}
Since             $f_j^{z'}\in H^{p_j}$  for every fixed  $z'\in\mathbf{U}^{n-1}$ by
Lemma~\ref{LE.VUKOTIC}, we obtain the estimate
\begin{equation*}
|f_j^{z'}(z_n)|^{p_j}
\le \frac {1}{ 1-|z_n|^2 }\|f_j^{z'}\|_{p_j}^{p_j},\quad z_n\in\mathbf{U}.
\end{equation*}
Thus, for fixed $z_n\in K$ we have
\begin{equation*}
\tilde{F}_{z_n}(z') = \Phi(\dots,|f_j(z',z_n)|^{p_j},\dots)\le
 \Phi(\dots,C\tilde{U}_j(z'),\dots),
\end{equation*}
where we have denoted
\begin{equation*}
\tilde{U}_j(z')=\|f_j^{z'}\|_{p_j}^{p_j},\quad z'\in\mathbf{U}^{n-1}
\end{equation*}
for          each $j=1,\dots,m$; see also Theorem \ref{TH.INT.LOGSUB.GENERAL}.  Thus,
\begin{equation*}
\tilde{F}_{z_n}(z')\le G(z'),
\end{equation*}
for
\begin{equation*}
G(z')= \Phi(\dots, C\tilde{U}_j(z'),\dots),\quad z'\in\mathbf{U}^{n-1}.
\end{equation*}
It remains to   show that $G\in L^1(\mathbf{U}^{n-1},\nu_{n-1})$. Since $\tilde{U}_j
\in PL_1(\mathbf{U}^{n-1})$ for all $j=1,\dots,m$, according  to the main inequality
for  functions in the   class $PL_1(\mathbf{U}^{n-1})$,                    we obtain
\[\begin{split}
\int_{\mathbf{U}^{n-1}} G(z')\, d\nu_{n-1}(z') &
 =  \int_{\mathbf{U}^{n-1}} \Phi(\dots,C\tilde{U}_j(z'),\dots)\, d\nu_{n-1}(z')
\\&\le \Phi(\dots,C\|\tilde{U}_j\|_1,\dots) < \infty.
\end{split}\]

Let          us now prove the second half of our theorem  --  the equality statement.

Equality      attains in the main inequality if and only if  equality holds at  both
places in~\eqref{BOTH.INEQ}. If $f_j\equiv 0$ for some $j,\, 1\le j\le m$,      then
equality obvious holds at both places in~\eqref{BOTH.INEQ}. In  the sequel we assume
this is not the case and will firstly prove that if equality attains   in   the main
inequality, then each $f_j\,  (j= 1,\dots,m)$    does not  vanish in  $\mathbf{U}^n$.
After that we  will use the   equality part  of the assumption $(\dag)$ in order  to
derive the equality statement in general.

Since $f_j\not\equiv 0$, in  view of~\eqref{GJ.NORM}  we     have $U_j\not \equiv 0$
for all $j=1,\dots,m$. Thus, equality holds at the second place  in~\eqref{BOTH.INEQ}
if                                                                       and only if
\begin{equation*}
U_j(z_n) =  | \Psi_j^1(z_n)|, \quad z_n\in\mathbf{U}
\end{equation*}
for all $j=1,\dots,m$,  where $(\dots,\Psi_j^1,\dots)\in\mathcal{E}(\Phi,\mu)$  (see
the equality statement of Theorem~\ref{TH.LOGSUBHARMONIC}). This     means that each
$U_j$ does not vanish in the unit  disc.

Note now that the        first   inequality in~\eqref{BOTH.INEQ} may be rewritten in
the    following    form
\begin{equation*}
\int_{\mathbf{U}}  F(z_n)\, d\nu _{n-1}(z_n)\le
\int_{\mathbf{U}} \Phi(\dots,U_j(z_n),\dots)\, d\nu_{n-1}(z_n).
\end{equation*}
Since        $F(z_n)$ and  $\Phi(\dots,U_j(z_n),\dots)$ are continuous in all points
$z_n\in\mathbf{U}$,       equality occurs in the preceding inequality if and only if
\begin{equation*}
F(z_n)         =    \Phi(\dots,U_j(z_n),\dots)\ \ \text{for all}\ \ z_n\in\mathbf{U}.
\end{equation*}
This  means         that equality holds  in~\eqref{FIRST.INEQ}  also for all $z_n\in
\mathbf{U}$,            what is  possible (in view of the  equality statement of the
induction hypothesis) if and only if
\begin{equation*}
f_j^{z_n}(z') = \Psi^{n-1}_{j,z_n}(z'),\quad z'\in\mathbf{U}^{n-1}
\end{equation*}
for all $j=1,\dots,m$;                     here $(\dots,\Psi^{n-1}_{j,z_n},\dots)\in
\mathcal{E}(\Phi,\nu_{n-1})$ for $z_n\in\mathbf{U}$.              Namely, since each
$U_j(z_n)\, (j= 1,\dots,m)$  does  not vanish in the unit   disc, it is not possible
to exist $j,\, 1\le j\le m$ and $z_n\in\mathbf{U}$  such that   $f_{j}^{z_n}\equiv0$
(see~\eqref{GJ.NORM}).

All together,   we have proved that if equality holds in the main inequality,   then
each $f_j\, (j=1,\dots,m)$ does not vanish in $\mathbf{U}^n$. Thus, we can      take
some branch   $f_j^{{p_j}/{\tilde{p}_j}}\in H^{\tilde{p}_j}(\mathbf{U}^n)$   for all
$j=1,\dots,m$.                   Using    now the equality statement of $(\dag)$ for
$f_j^{{p_j}/\tilde{p}_j},\,   j=1,\dots,m$, we find that  equality    holds if   and
only  if   each $f_j\, (j=1,\dots,m)$     is of the form   $f_j^{{p_j}/\tilde{p}_j}=
\Psi_j^n$,                                                       or what is the same
\begin{equation*}
f_j=\{\Psi_j^n\}^{\tilde{p}_j/p_j}
\end{equation*}
for some       $(\dots,\Psi_j^n,\dots)\in\mathcal{E}(\Phi,\nu_n)$. This finishes the
proof  of the  equality statement of Theorem~\ref{TH.MAIN}.
\end{proof}

\begin{remark}
An inner function in $\mathbf{U}^n$ is a bounded analytic      function whose radial
boundary  values  satisfy $|G\sp\ast(\zeta)| = 1$  a.e. on     $\mathbf{T}^n$.    An
inner function $G$ in $\mathbf{U}^n$   is said to be good if   $\mathcal {H}_G\equiv
0$; here, $\mathcal{H}_G$ stands for the least $n$-harmonic   majorant of $\log |G|$
in $\mathbf{U}^n$. One can prove that  an inner function $G$ is good if  and only if
$\mathcal{H}_G (0) = 0$. In the case of the  unit disc,  the good    inner functions
are  precisely the Blaschke products.

As is well known,     in  one  variable case, there corresponds to every  $f\in H^p$
a Blaschke product $B$ such that  $h = f/B$ has no zeros in  the   unit disc,  $h\in
H^p(\mathbf{U})$, and even $\|h\|_p  = \|f\|_p$. If $\mathbf{U}$     is  replaced by
$\mathbf{U}^n$, where $n>1$, one might expect that the role of the Blaschke products
is taken over by    the good inner functions. This  is true, but   only to a certain
extent. The analogues of the one--variable      theory hold for exactly  those $f\in
H^p(\mathbf{U}^n)$ for which the  least $n$-harmonic majorant     $\mathcal{H}_f$ of
$\log |f|$ is          the real part of an analytic function -- the class denoted by
$\mathrm{RP}(\mathbf{U}^n)$. We have

\begin{proposition}[cf.~\cite{RUDIN.BOOK.POLYDISC}]
Assume $f \in N(\mathbf{U}^n)$. Then
\begin{enumerate}
\item If $\mathcal{H}_f$ is not in $\mathrm{RP}(\mathbf{U}^n)$,   then no good inner
function has the same zeros as $f$;
\item If $\mathcal{H}_f\in \mathrm{RP}(\mathbf{U}^n)$, then there   is a  good inner
function  (unique up to a multiplicative constant) with the same zeros        as $f$.
\end{enumerate}
\end{proposition}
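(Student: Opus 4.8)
The plan is to read the hypothesis $\mathcal{H}_f\in\mathrm{RP}(\mathbf{U}^n)$ as the assertion that the least $n$-harmonic majorant of $\log|f|$ is the real part of a holomorphic function, and to tie it directly to a factorization $f=G\cdot h$ with $G$ a good inner function and $h$ zero free. Three ingredients will be used throughout. First, for $f\in N(\mathbf{U}^n)$ the $n$-subharmonic function $\log|f|$ admits a least $n$-harmonic majorant $\mathcal{H}_f$. Second, the characterization recalled above: an inner $G$ is good precisely when $\mathcal{H}_G\equiv0$, equivalently $\mathcal{H}_G(0)=0$. Third, the elementary arithmetic of majorants: if $v$ is $n$-harmonic, then the least $n$-harmonic majorant of $u+v$ equals $\mathcal{H}_u+v$, since $M\mapsto M-v$ is an order preserving bijection between the $n$-harmonic majorants of $u+v$ and those of $u$, hence carries least to least. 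I will also use that a zero free holomorphic $h$ on the simply connected polydisc has a holomorphic logarithm, so $\log|h|=\mathrm{Re}(\log h)\in\mathrm{RP}(\mathbf{U}^n)$.

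For assertion (1) I would argue by contraposition: assuming some good inner $G$ shares the zeros of $f$, I produce $h$ with $\mathcal{H}_f=\log|h|\in\mathrm{RP}(\mathbf{U}^n)$. Since $G$ and $f$ have the same zero divisor and the local ring of germs of holomorphic functions is a unique factorization domain, the quotient $h=f/G$ extends across the common zero variety to a zero free holomorphic function on $\mathbf{U}^n$, so $\log|h|$ is $n$-harmonic. On $\mathbf{U}^n$ one has $\log|f|=\log|G|+\log|h|$, whence by ingredient three (with $v=\log|h|$) $\mathcal{H}_f=\mathcal{H}_G+\log|h|$. As $G$ is good, $\mathcal{H}_G\equiv0$, so $\mathcal{H}_f=\log|h|=\mathrm{Re}(\log h)\in\mathrm{RP}(\mathbf{U}^n)$, which is exactly the contrapositive of (1).

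For assertion (2) I would construct the inner function explicitly. Write $\mathcal{H}_f=\mathrm{Re}(H)$ with $H$ holomorphic, set $h=e^{H}$ (zero free, with $\log|h|=\mathcal{H}_f$), and put $G=f\,e^{-H}$, an analytic function with precisely the zeros of $f$. Because $\mathcal{H}_f$ majorizes $\log|f|$ we get $\log|G|=\log|f|-\mathcal{H}_f\le0$, so $|G|\le1$; in particular $G\in H^{\infty}(\mathbf{U}^n)\subseteq N^{*}(\mathbf{U}^n)$. Ingredient three gives $\mathcal{H}_G=\mathcal{H}_f-\mathcal{H}_f\equiv0$, so $G$ is good once it is known to be inner. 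To see $G$ is inner, note that the means $\int_{\mathbf{T}^n}\log|G(r\zeta)|\,dm_n(\zeta)$ increase to $\mathcal{H}_G(0)=0$ and, since $G\in N^{*}(\mathbf{U}^n)$, converge by the characteristic boundary behaviour of $N^{*}$ to $\int_{\mathbf{T}^n}\log|G^{*}|\,dm_n$ as well. Thus $\int_{\mathbf{T}^n}\log|G^{*}|\,dm_n=0$ with $\log|G^{*}|\le0$ a.e., forcing $|G^{*}|=1$ a.e., so $G$ is a good inner function sharing the zeros of $f$.

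For uniqueness, if $G_1,G_2$ are good inner functions both carrying the zeros of $f$, then by the computation in assertion (1) each $h_i=f/G_i$ is zero free with $\log|h_i|=\mathcal{H}_f$, so $|h_1|=|h_2|$ and hence $|G_1|=|G_2|$ throughout $\mathbf{U}^n$; the quotient $G_1/G_2$ is then holomorphic of constant modulus $1$, i.e. a unimodular constant, as claimed. The steps I expect to be delicate are not the majorant bookkeeping but the two several variables inputs: justifying that coincidence of zero divisors yields an honest holomorphic quotient (the local unique factorization argument), and the boundary behaviour of the least $n$-harmonic majorant, namely that for $N^{*}$-functions it is the Poisson integral of $\log|G^{*}|$. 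Everything hinges on the phenomenon, genuinely special to $n\ge2$, that the $n$-harmonic majorant $\mathcal{H}_f$ need not be pluriharmonic, so membership in $\mathrm{RP}(\mathbf{U}^n)$ is a real restriction.
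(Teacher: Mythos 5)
Your proof is essentially correct, but note that there is no in-paper argument to compare it against: the paper states this proposition inside a remark and attributes it to Rudin's monograph \cite{RUDIN.BOOK.POLYDISC} without proof (it is only used there to sketch an alternative, admittedly incomplete, factorization-based proof of the main theorem). What you have written is in substance a reconstruction of the standard (Rudin's) argument: the majorant bookkeeping $\mathcal{H}_{u+v}=\mathcal{H}_u+v$ for $n$-harmonic $v$, the construction $G=f\,e^{-H}$ with $\mathrm{Re}\,H=\mathcal{H}_f$, and the zero-free quotient argument for part (1) and for uniqueness are exactly the expected route, and the logic is sound.

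Two points deserve tightening. First, in part (1) and in the uniqueness step you should say explicitly that ``same zeros'' is meant in the sense of divisors (equal multiplicities): with mere equality of zero sets the local-UFD argument producing a zero-free holomorphic extension of $f/G$ fails (compare $z_1$ and $z_1^2$); the divisor reading is the one intended in \cite{RUDIN.BOOK.POLYDISC}, and you flag this as delicate but should commit to it. Second, in part (2) the appeal to ``the characteristic boundary behaviour of $N^{*}$'' to get $\int_{\mathbf{T}^n}\log|G_r|\,dm_n\to\int_{\mathbf{T}^n}\log|G^{*}|\,dm_n$ is both heavier than needed and not quite a quotable theorem in that form; since $|G|\le 1$ you can avoid it entirely. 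Fatou's lemma applied to $-\log|G_{r_k}|\ge 0$ along any sequence $r_k\to 1^-$ gives $\int_{\mathbf{T}^n}\bigl(-\log|G^{*}|\bigr)\,dm_n\le\liminf_{k}\int_{\mathbf{T}^n}\bigl(-\log|G_{r_k}|\bigr)\,dm_n=-\mathcal{H}_G(0)=0$, and since $-\log|G^{*}|\ge 0$ a.e.\ this forces $|G^{*}|=1$ a.e., i.e.\ $G$ is inner. With these two repairs your argument is complete.
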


And

\begin{proposition}[cf.~\cite{RUDIN.BOOK.POLYDISC}]
Let $G$ be a good inner  function, $h$ an  analytic function in $\mathbf{U}^n$,  and
$f =  G \cdot h$. Then  $h\in N(\mathbf{U}^n)$. Moreover,
\begin{enumerate}
\item           If $f\in N\sp\ast(\mathbf{U}^n)$, then $h\in N\sp\ast(\mathbf{U}^n)$.
\item If $f\in H^p(\mathbf{U}^n)\, (0 <p<  \infty)$,   then $h\in H^p(\mathbf{U}^n)$,
and $\|h\|_p = \|f\|_p$.
\end{enumerate}
\end{proposition}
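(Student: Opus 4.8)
The plan is to handle the three assertions in increasing order of difficulty, the common engine being two elementary facts: $|G|\le 1$ on $\mathbf{U}^n$ (so $\log|G|\le 0$), and the goodness of $G$. Recall the characterization noted above that goodness means $\mathcal{H}_G(0)=0$; since the torus averages of the $n$-subharmonic function $\log|G|$ increase to $\mathcal{H}_G(0)$, this says precisely that $\int_{\mathbf{T}^n}\log|G(r\zeta)|\,dm_n(\zeta)\nearrow 0$ as $r\to1^{-}$, equivalently $-\log|G_r|\to 0$ in $L^1(\mathbf{T}^n,m_n)$. The relevant hypothesis (present in (1) and (2), since $N^{\ast}\subset N$ and $H^p\subset N$, and standing throughout this circle of results) is $f\in N(\mathbf{U}^n)$, so that $h=f/G$ is the analytic object under study. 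The single pointwise inequality driving everything is $\log^{+}|h|\le\log^{+}|f|-\log|G|$, valid because $\log|G|\le 0$.

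First I would prove $h\in N(\mathbf{U}^n)$. Integrating that pointwise bound over the torus of radius $r$ gives $\int_{\mathbf{T}^n}\log^{+}|h_r|\,dm_n\le\int_{\mathbf{T}^n}\log^{+}|f_r|\,dm_n-\int_{\mathbf{T}^n}\log|G_r|\,dm_n$. Because $\log^{+}|h|$ is $n$-subharmonic, the left side is non-decreasing in $r$, so its supremum equals its limit as $r\to1^{-}$; the first term on the right stays bounded by $\sup_r\int\log^{+}|f_r|<\infty$ (as $f\in N$) and the second tends to $0$ by goodness. Hence $\sup_{0\le r<1}\int_{\mathbf{T}^n}\log^{+}|h_r|\,dm_n<\infty$, i.e. $h\in N$. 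In particular $h$ has radial limits a.e., and since $f^{\ast}=G^{\ast}h^{\ast}$ a.e. with $|G^{\ast}|=1$ a.e., I record the crucial boundary identity $|h^{\ast}|=|f^{\ast}|$ a.e. on $\mathbf{T}^n$.

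For (1) I would upgrade ``no loss in the limit'' to ``no loss at the boundary.'' Since $f\in N^{\ast}$, uniform integrability together with $\log^{+}|f_r|\to\log^{+}|f^{\ast}|$ a.e. gives $\int\log^{+}|f_r|\to\int\log^{+}|f^{\ast}|$. Writing $L_h:=\lim_{r\to1^{-}}\int\log^{+}|h_r|\,dm_n$ (the limit exists by monotonicity and is finite as $h\in N$), the integrated bound and goodness give $L_h\le\int\log^{+}|f^{\ast}|\,dm_n$. On the other hand Fatou's lemma gives $\int\log^{+}|h^{\ast}|\le L_h$, while the boundary identity forces $\int\log^{+}|h^{\ast}|=\int\log^{+}|f^{\ast}|$. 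Squeezing these yields $L_h=\int\log^{+}|h^{\ast}|$, and by Scheff\'e's lemma the a.e. convergence $\log^{+}|h_r|\to\log^{+}|h^{\ast}|$ is promoted to $L^1$-convergence; hence $\{\log^{+}|h_r|\}$ is uniformly integrable and $h\in N^{\ast}$. This squeeze is, to my mind, the heart of the matter and the step where goodness of $G$ is indispensable: it is exactly $\int\log|G_r|\to0$ that prevents the interior mass of $\log^{+}|h|$ from exceeding the boundary mass, and I expect it to be the main obstacle.

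Finally, for (2) I would combine (1) with the boundary characterization of the $H_\phi$-classes inside $N^{\ast}$. Since $H^p=H_\phi$ with the strongly convex $\phi(t)=e^{pt}$ and $H^p\subset N^{\ast}$, part (1) gives $h\in N^{\ast}$; the boundary identity yields $|h^{\ast}|^p=|f^{\ast}|^p\in L^1(\mathbf{T}^n,m_n)$, so $\phi(\log|h^{\ast}|)\in L^1$ and therefore $h\in H_\phi=H^p$. The norm equality then follows at once from $\|h\|_p^p=\int_{\mathbf{T}^n}|h^{\ast}|^p\,dm_n=\int_{\mathbf{T}^n}|f^{\ast}|^p\,dm_n=\|f\|_p^p$, using the boundary identity one last time. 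The only genuinely nontrivial ingredient is part (1); the assertions $h\in N$ and (2) are then short.
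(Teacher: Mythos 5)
First, the essential point of comparison: the paper contains \emph{no proof} of this proposition. It is stated inside a remark, cited from Rudin's monograph on function theory in polydiscs, and used there only to sketch an alternative (explicitly incomplete) proof of the main theorem; so your argument can only be judged on its own merits, and on those merits it is correct. You rightly supply the implicit hypothesis $f\in N(\mathbf{U}^n)$, exactly as in Rudin's formulation (without it the first assertion fails trivially, e.g.\ $G\equiv 1$ and any $h\notin N(\mathbf{U}^n)$), and every external fact you invoke is either recorded in the paper or standard: goodness of $G$ is equivalent to $\mathcal{H}_G(0)=0$, hence to $\int_{\mathbf{T}^n}\log|G_r|\,dm_n\nearrow 0$; radial limits exist a.e.\ for functions in $N(\mathbf{U}^n)$; and within $N^\ast(\mathbf{U}^n)$ the $H_\phi$-classes are characterized by their boundary values. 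The skeleton --- the pointwise bound $\log^+|h|\le\log^+|f|-\log|G|$, monotonicity of torus means of $n$-subharmonic functions to get $h\in N(\mathbf{U}^n)$, the boundary identity $|h^\ast|=|f^\ast|$ a.e., the Fatou/Scheff\'e squeeze for (1), and the $H_\phi$ boundary criterion together with $\|f\|_p=\|f^\ast\|_{L^p(\mathbf{T}^n,m_n)}$ for (2) --- holds together. Two refinements are worth making explicit. First, Scheff\'e gives $L^1$-convergence of $\log^+|h_r|$ as $r\to 1^-$, which yields uniform integrability only of the tail $\{\log^+|h_r|: r\ge r_0\}$; to get the full family $\{\log^+|h_r|: 0\le r<1\}$ demanded by the paper's definition of $N^\ast$, add the one-line remark that for $r\le r_0$ these functions are uniformly bounded, since $h$ is continuous on the closed polydisc of radius $r_0$. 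Second, part (1) admits a shorter route that needs neither the boundary identity nor Fatou/Scheff\'e: given $\epsilon>0$, pick $r_1$ with $-\int_{\mathbf{T}^n}\log|G_r|\,dm_n<\epsilon$ for $r\ge r_1$ (goodness), and pick $\delta>0$ from the uniform absolute continuity of $\{\log^+|f_r|\}$ (i.e.\ from $f\in N^\ast$); then $m_n(E)<\delta$ forces
\begin{equation*}
\int_E\log^+|h_r|\,dm_n\ \le\ \int_E\log^+|f_r|\,dm_n\ -\ \int_{\mathbf{T}^n}\log|G_r|\,dm_n\ <\ 2\epsilon
\end{equation*}
for all $r\ge r_1$, while the range $r<r_1$ is again handled by uniform boundedness. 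Your longer route does buy something extra --- the $L^1$-convergence $\log^+|h_r|\to\log^+|h^\ast|$ --- but that is not needed for the statement.
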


We will    sketch now a simple but incomplete proof of Theorem~\ref{TH.MAIN} based on
the preceding    propositions.  This proof is motivated by the method which is almost
standard in the    theory of Hardy spaces (in the unit disc).    However, as we  have
seen,  this approach    gives a complete proof in the    one--dimensional case of our
main theorem (Theorem~\ref{TH.MAIN.1}).

Let $f_j\in H^{p_j}(\mathbf{U}^n),\, j=1,\dots,m$.    Without     loss of  generality,
suppose that  $f_j\not\equiv0$ for all $j=1,\dots,m$. Assume, moreover,     that each
$\log|f_j|$          has an $n$-harmonic               majorant which      belongs to
$\mathrm{RP}(\mathbf{U}^n)$. With  the  preceding additional    assumption,     it is
possible to obtain the    factorization $f_j = G_j      h_j$,  where  $G_j$ is a good
inner function on the  unit polydisc with the same zeroes  as $f_j$.  Recall, we take
$G_j \equiv 1$, if $f_j$ is zero--free. Now the proof goes in the same way  as in the
case  $n=1$.
\end{remark}

\section{On the Burbea  inequality}
\subsection{An extension of the Burbea inequality}
Our       goal here is to derive the next theorem proved by  the author in the   case
of the unit bidisc. The inequality in the case $m=2$ may be  found       in the Kalaj
paper~\cite{KALAJ.ANNALI}.                                           It is also given
in~\cite{MATELJEVIC.PAVLOVIC.PUBLICATIONS.1991}  in the non--weighted case, i.e., for
$m=2$ and $f_1=f_2$.

\begin{theorem}[cf.~\cite{MARKOVIC.PAMS}]\label{TH.BURBEA.MARKOVIC}
Let $m\ge 2$ be an integer and $f_j(z)\in H^{p_j}(\mathbf U^n)\, (0<p_j<\infty)$  for
all $j=1,\, 2, \dots,m$. Then
\begin{equation*}
\prod_{j=1}^m |f_j|^{p_j}\in L^1(\mathbf{U}^n,dA_{\mathbf{m}-\mathbf{2}})
\end{equation*}
with
\begin{equation*}
\int_{\mathbf U^n} \left\{\prod_{j=1}^m |f_j(z)|^{p_j}\right\}       dA_{\mathbf{ m}-
\mathbf{2}}(z)\le \prod_{j=1}^m  \|f_j\|^{p_j}_{p_j}.
\end{equation*}
Equality           attains if and only if either $\prod_{j=1}^mf_j\equiv 0$   or each
$f_j\,  (j=1,\, 2,\dots,m)$ is of the form
\begin{equation*}
f_j(z) = c_j{K}_w(z)^{2/{p_j}},\quad z\in\mathbf{U}^{n}
\end{equation*}
for         some (common) $w\in\mathbf{U}^n$ and a non--zero $c_j$.  Here, ${K}(z,w)=
\prod_{j=1}^m(1-z_j\overline{w}_j)^{-1}$ denotes the Cauchy--Szeg\"{o} kernel for the
unit polydisc.
\end{theorem}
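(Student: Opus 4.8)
The plan is to derive Theorem~\ref{TH.BURBEA.MARKOVIC} as a direct application of the main result, Theorem~\ref{TH.MAIN}, by verifying that the data of the problem fit the framework of condition $(\dag)$. Concretely, I would set $\Phi(x_1,\dots,x_m)=x_1\cdots x_m$ and choose the one--dimensional weighted measure $d\mu=dA_{m-2}$, so that the associated product measure on the polydisc is $d\nu_n=dA_{\mathbf{m}-\mathbf{2}}$. With these choices the function $\Phi$ is continuous, strictly increasing in each variable separately on $\mathbf{R}^m_+$, and vanishes whenever one of its arguments is zero, so the structural hypotheses on $\Phi$ are met. The inequality of the theorem is then exactly~\eqref{INEQ.MAIN} for this particular $\Phi$ and $\nu_n$, and the integrability claim $\prod_{j=1}^m|f_j|^{p_j}\in L^1(\mathbf{U}^n,dA_{\mathbf{m}-\mathbf{2}})$ is the integrability assertion of Theorem~\ref{TH.MAIN}.

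The crucial step is to verify condition $(\dag)$, that is, to exhibit exponents $\tilde{p}_1,\dots,\tilde{p}_m$ for which the sharp inequality~\eqref{INEQ.MAIN.THILDA} holds together with its equality characterization. The natural choice is the Hilbert case $\tilde{p}_1=\dots=\tilde{p}_m=2$, because then each $H^{\tilde p_j}(\mathbf{U}^n)=H^2(\mathbf{U}^n)=H_{\mathbf{1}}(\mathbf{U}^n)$ is the reproducing kernel Hilbert space appearing in Theorem~\ref{TH.GENERAL.HARDY.POLYDISC}, and $dA_{\mathbf{m}-\mathbf{2}}$ is precisely the measure $dA_{\mathbf{q}-\mathbf{2}}$ with $q=m$ whose norm admits the integral representation~\eqref{EQ.NORM.GEN.HARDY}. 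With $\tilde p_j=2$ the left side of~\eqref{INEQ.MAIN.THILDA} becomes $\int_{\mathbf{U}^n}\prod_{j=1}^m|f_j|^2\,dA_{\mathbf{m}-\mathbf{2}}=\|\prod_{j=1}^m f_j\|_{\mathbf{m}}^2$, while the right side is $\prod_{j=1}^m\|f_j\|_2^2=\prod_{j=1}^m\|f_j\|_{\mathbf{1}}^2$. Thus~\eqref{INEQ.MAIN.THILDA} is nothing but the squared norm inequality of Burbea's Theorem~\ref{TH.GENERAL.HARDY.POLYDISC} with $q_j=\mathbf{1}$ and $q=\mathbf{m}$, and this supplies both the inequality and the integrability needed for $(\dag)$.

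For the equality statement I would read off the extremal family $\mathcal{E}(\Phi,\nu_n)$ directly from Theorem~\ref{TH.GENERAL.HARDY.POLYDISC}: equality in~\eqref{INEQ.MAIN.THILDA} (with $\tilde p_j=2$) holds precisely when each $f_j$ is a nonzero constant multiple of the reproducing kernel section $K_{\mathbf{1}}^w(z)=K(z,w)$ for a common $w\in\mathbf{U}^n$, so the extremals are $\Psi_j^n(z)=c_j K(z,w)$. Since these are manifestly zero--free in $\mathbf{U}^n$, part~(1) of Theorem~\ref{TH.MAIN} is confirmed, and part~(2) then gives the extremals for the general inequality~\eqref{INEQ.MAIN} as $f_j(z)=\Psi_j^n(z)^{\tilde p_j/p_j}=\bigl(c_jK(z,w)\bigr)^{2/p_j}$, which after absorbing constants is exactly $f_j(z)=c_j K_w(z)^{2/p_j}$ as stated. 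The main obstacle I anticipate is purely bookkeeping rather than conceptual: one must be careful that $(\dag)$ is genuinely satisfied in the Hilbert case, which requires knowing that every $f_j\in H^2(\mathbf{U}^n)$ indeed makes $\prod|f_j|^2$ integrable against $dA_{\mathbf{m}-\mathbf{2}}$ (this is the integrability half of Burbea's theorem), and that the normalization constant $(q-1)/\pi$ built into $dA_{q-2}$ with $q=m$ matches the normalization implicit in~\eqref{EQ.NORM.GEN.HARDY}, so that the two sides line up without stray constants.
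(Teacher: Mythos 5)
Your proposal is correct and follows essentially the same route as the paper: the paper's proof likewise takes $\Phi(x_1,\dots,x_m)=x_1\cdots x_m$ and $d\mu=dA_{m-2}$, verifies condition $(\dag)$ in the Hilbert case $\tilde{p}_j=2$ via Burbea's Theorem~\ref{TH.GENERAL.HARDY.POLYDISC} with $q_j=\mathbf{1}$ (using the norm identification $H_{\mathbf{1}}(\mathbf{U}^n)=H^2(\mathbf{U}^n)$ and $H_{\mathbf{m}}(\mathbf{U}^n)=L^2_{a,m-2}(\mathbf{U}^n)$), and reads off the extremal family $\mathcal{E}(\Phi,\nu_n)=\{(C_1K_w,\dots,C_mK_w):w\in\mathbf{U}^n,\,C_j\ne0\}$ before applying Theorem~\ref{TH.MAIN}. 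Your additional bookkeeping about the integral representation~\eqref{EQ.NORM.GEN.HARDY} and the normalization of $dA_{m-2}$ only makes explicit what the paper leaves implicit.
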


\begin{proof}
The general case which contains this corollary follows from our Theorem~\ref{TH.MAIN},
where we have to take  for $\Phi:\mathbf{R}^m_+\rightarrow\mathbf{R}_+$ and $\mu$ the
following:
\begin{equation*}
\Phi(x_1,\dots,x_m)=  \prod_{j=1}^m x_j
\end{equation*}
and
\begin{equation*}
d\mu (z) = dA_{m-2} (z) =   \frac{m-1}\pi (1-|z|^2)^{m-2}dA(z).
\end{equation*}
The condition $(\dag)$ is satisfied for $\tilde{p}_j =2$ (for all $j=1,\, 2,\dots,m)$,
and the family of extremals for $\nu_n$ is
\begin{equation*}
\mathcal{E}(\Phi,\nu_n)
=\{(C_1K_w(z),\dots, C_m K_w(z) ) : w\in\mathbf{U}^n,\, C_j\ne 0,\, j=1,\ 2,\dots,m\},
\end{equation*}
according to Theorem~\ref{TH.GENERAL.HARDY.POLYDISC}    (for $q_j=\mathbf{1},\, j=1,\,
2,\dots,m$).

Recall that $H_{\mathbf {1}} (\mathbf{U}^n)=H^2(\mathbf{U}^n)$ is the Hardy space and
$H_{\mathbf{m}}(\mathbf{U}^n)=L^2_{a,m-2}(\mathbf{U}^n)$      is the weighted Bergman
space.
\end{proof}

\subsection{Remarks}
The rest of this section is devoted to some comments concerning  the particular cases
of  the preceding theorem.

For    $n=1$     and $p_j=p,\, f_j=f\in H^p,\, j= 1,\, 2,\dots,m$  the  inequality in
Theorem~\ref{TH.BURBEA.MARKOVIC}       becomes
\begin{equation}\label{INEQ.BURBEA.MATELJEVIC.PAVLOVIC}
\frac{m-1}\pi\int_{\mathbf{U}}  |f(z)|^{mp}\, (1-|z|^2)^{m-2}\, dA(z)\le
\|f\|^{mp}_p.
\end{equation}
Extremal      functions do not depend on the letter $m$. Explicitly, a function $f\in
H^p$ is extremal   for~\eqref{INEQ.BURBEA.MATELJEVIC.PAVLOVIC} if and only  if it has
the form
\begin{equation*}
f(z) = \frac  \lambda {(1-z\overline{w})^{2/p}},\quad z\in\mathbf{U}
\end{equation*}
for                               some $w\in\mathbf{U}$ and a constant $\lambda$. The
inequality~\eqref{INEQ.BURBEA.MATELJEVIC.PAVLOVIC}         may be found in the Burbea
paper~\cite{BURBEA.ILLINOIS.87}, but it  was proved earlier     by Mateljevi{\'c} and
Pavlovi{\'c}~\cite{MATELJEVIC.PAVLOVIC.VESNIK}        in a          different     way
-- considering one of the  Rudin problems for analytic functions in the unit polydisc.

If we take    $m=2$ in~\eqref{INEQ.BURBEA.MATELJEVIC.PAVLOVIC},    we have the modern
version of the Carleman inequality~\cite{CARLEMAN.MATH.Z}:   If $f(z)$ belongs to the
Hardy    space $H^p$, where $p$ is any positive  number, then it also belongs  to the
Bergman space $L^{2p}_a$,  and
\begin{equation}\label{INEQ.CARLEMAN.HP}
4\pi\int_{\mathbf{U}} |f(z)|^{2p} dA(z)\le
\left\{\int_{\mathbf{T}} |f(\zeta)|^p  |d\zeta|\right\}^{2}.
\end{equation}
A proof of~\eqref{INEQ.CARLEMAN.HP} and the corresponding equality statement  is also
exposed  in~\cite{VUKOTIC.MONTHLY} with the    observation that the original Carleman
approach                         leads to a simpler  proof of the result of Hardy and
Littlewood~\cite{HARDY.LITTLEWOOD.MATH.Z}        on boundedness      of the inclusion
operator from  $H^p$ into  $L^{2p}_a$.   A similar approach was given      earlier by
Mateljevi\'{c}~\cite{MATELJEVIC.LECTURE}.   The inequality may be also found  in  the
Strebel                            book~\cite[Theorem 19.9, pp. 96--98]{STREBEL.BOOK}.

The inequality~\eqref{INEQ.CARLEMAN.HP}     is related to the classical isoperimetric
inequality:         If $D$ is  a    simply--connected domain  in the  plain such that
$\partial D$ is   a rectifiable     curve, then the area  of  $D$  and the length  of
$\partial D$  satisfy
\begin{equation}\label{INEQ.ISOPERIMETRIC}
4\pi  \mathrm{Area}(D)\le  {\mathrm{Length}(\partial D)^2}
\end{equation}
with  equality if    and only if $D$ is a disc.            The analytic  proof of the
isoperimetric inequality is exposed in~\cite{VUKOTIC.MONTHLY},                    see
also~\cite{MATELJEVIC.FILOMAT.1},                        ~\cite{MATELJEVIC.FILOMAT.2},
~\cite{MATELJEVIC.PAVLOVIC.JMAA}. For      a discussion on~\eqref{INEQ.ISOPERIMETRIC},
various connections  with some known analytic                inequalities  (including
Carleman's one), we refer     to the survey article~\cite{OSSERMAN.BAMS} of  Osserman.

In~\cite{CARLEMAN.MATH.Z}  Carleman   deduced~\eqref{INEQ.ISOPERIMETRIC} for  minimal
surfaces.  In that case the relation~\eqref{INEQ.ISOPERIMETRIC}             should be
understood in             the  context    of  the intrinsic geometry    of a  surface.
Historically, that  was the first analytical  proof  of   the classical isoperimetric
inequality for  surfaces. Carleman's   original                 proof is based on the
Weierstrass--Enneper parametrization of minimal surfaces and the following inequality
which is also contained in  Theorem~\ref{TH.BURBEA.MARKOVIC}. Let $f_1,\, f_2\in H^1$,
then
\begin{equation*}\label{EQ.CARLEMAN.DOUBLE}
4\pi\int_{\mathbf{U}} |f_1(z)| |f_2(z)| dA(z)
\le\int_{\mathbf{T}} |f_1(\zeta)| |d\zeta|\int_{\mathbf{T}} |f_2(\zeta)|  |d\zeta|.
\end{equation*}

It is well known that the   classical isoperimetric inequality             holds for
simply--connected domains   on a surface of          the     non--positive  Gaussian
curvature.  One           may         use  a                          version of the
inequality~\eqref{INEQ.CARLEMAN.HP} for logarithmically     subharmonic functions in
order                  to derive the isoperimetric                inequality in this
circumference~\cite{BECKENBACH.TAMS.MINIMAL},       ~\cite{BECKENBACH.TAMS.NEGATIVE},
~\cite{LOZINSKY.WORK},~\cite{RADO.BOOK}.

\end{document}